\pgfplotsset{compat=1.14}
\newtheoremstyle{it_dotless} 
                        {0.5em}   
                        {0.5em}   
                        {\itshape}  
                        {}          
                        {\bfseries} 
                        {:}         
                        {\newline}  
                        {}			
\newtheoremstyle{dotless} 
                        {0.5em}   
                        {0.5em}   
                        {}  		
                        {}          
                        {\bfseries} 
                        {:}         
                        {\newline}  
                        {}			
\theoremstyle{it_dotless}
\newtheorem{theorem}{Theorem}
\newtheorem{lemma}[theorem]{Lemma}
\theoremstyle{dotless}
\newtheorem{remark}[theorem]{Remark}
\newtheorem{definition}[theorem]{Definition}
\newcommand{\bitem}{\begin{itemize}}
\newcommand{\eitem}{\end{itemize}}
\newcommand{\mc}[1]{\mathcal{#1}}
\newcommand{\N}{\mathbb{N}}
\newcommand{\R}{\mathbb{R}}
\newcommand{\bpm}{\begin{pmatrix}}
\newcommand{\epm}{\end{pmatrix}}
\newcommand{\bsm}{\left(\begin{smallmatrix}}
\newcommand{\esm}{\end{smallmatrix}\right)}
\newcommand{\T}{\top}
\newcommand{\la}{\langle}
\newcommand{\ra}{\rangle}
\DeclareMathOperator{\tr}{tr}
\DeclareMathOperator{\Diag}{Diag}
\DeclareMathOperator{\diag}{diag}
\DeclareMathOperator{\intr}{int}
\DeclareMathOperator{\bd}{bd}
\DeclareMathOperator{\argmin}{arg min}
\DeclareMathOperator{\argmax}{arg max}
\DeclareMathOperator{\aff}{aff}
\DeclareMathOperator{\conv}{conv}
\DeclareMathOperator*{\keq}{\leq_\mathcal{K}}
\DeclareMathOperator*{\eqk}{\geq_\mathcal{K}}
\DeclareMathOperator*{\diam}{diam}
\DeclareMathOperator*{\dist}{dist}
\DeclareMathOperator*{\K}{\mc{K}}
\newcommand{\vect}[1]{\bm{#1}}
\newcommand{\matr}[1]{\mathbf{#1}}
\newcommand{\I}{\;\middle | \;}
\newcommand{\e}[1][]{\vect{e}_{#1}}		
\begin{document}

\title{
The LP-Newton Method and Conic Optimization 
\thanks{
Authors gratefully acknowledge support by the German Research Foundation (DFG), grant GRK 1653, and the German-Japanese University Network (HeKKSaGOn). We thank Prof. Fujishige for introducing us to the topic during a visit to Kyoto. His comments also helped to greatly improve the present paper.
}
}

\author{Francesco Silvestri
 \footnote{Institut f\"{u}r Informatik, Heidelberg University, INF205, 69120 Heidelberg, Germany}$^{\; ,}$\footnote{IWR, Heidelberg University, INF205, 69120 Heidelberg, Germany} \and Gerhard~Reinelt \footnotemark[2]}


%

\maketitle

\begin{abstract}
We propose that the LP-Newton method can be used to solve conic LPs over a \emph{conic box}, whenever linear optimization over an otherwise unconstrained conic box is easy. In particular, if $\keq$ is the partial order induced by a proper convex cone $\K$, then optimizing a linear function over the intersection of $[\vect{l},\vect{u}]_{\K}=\{ \vect{l}\keq \vect{x}\keq \vect{u}\}$ and an affine subspace can be done with this method whenever optimizing a linear function over $[\vect{l},\vect{u}]_{\K}$ is efficient. 

This generalizes the result for the case of $\K=\R^n_+$ that was originally proposed for using the method. Specifically, we show how to adapt this method for both SOCP and SDP problems and illustrate the method with a few experiments. While the approach is promising due to the low amount of Newton steps needed, solving the minimum-norm-point problem involved
in the Newton step with a Frank-Wolfe algorithm is not advisable.
\end{abstract}

\section{Introduction}

\subsection{Summary}

The LP-Newton method was introduced in \cite{Fujishige08} to find an end-point of the intersection of a line and a zonotope. Here L and P, respectively, stand for line and (convex) polyhedron, not for Linear Programming.

The algorithm resembles Dinkelbach's discrete Newton(-Raphson) method for finding the zero point of a one-dimensional piecewise-linear convex function, where the minimum-norm-point algorithm is utilized to compute a subderivative of this function. 

The minimum-norm-point algorithm is fast in this setting since its subroutine consists of linear optimization over a zonotope, which is trivial and can be done in linear time. This naturally leads to an algorithm for the zonotope formulation of linear programming problems.

In this paper, we extend the notion of zonotopes to $\K$-zonotopes for any proper convex cone $\K$, which we will define as the image of a conic interval under a linear transformation. This leads to a $\K$-zonotope formulation for conic LPs which can then be solved by the LP-Newton method in the same manner. While this makes the method much more general, we lose the finite convergence and are left with asymptotic convergence instead. 

%
%
%

\subsection{Background}

This paper is based on the original work \cite{Fujishige08} where the LP-Newton method was introduced and successfully applied to linear programming. 
Consequently, our extension also heavily relies on the minimum-norm-point algorithm, which is a special case of the more general class of Frank-Wolfe algorithms \cite{Jaggi13,Wolfe76} and will be discussed in more detail in section~\ref{sec:minimum-norm-point}.

The Newton method itself has long been an effective tool in both continuous optimization as well as cone programming through the use of interior point methods  \cite{Polyak07}. In fact, the field of cone programming is currently dominated by interior point methods, which have been extensively studied in the recent years \cite{Todd08}. However, the complexity of these algorithms is usually prohibitive for large problems and so there is a huge demand for an alternative with better complexity properties. 

To this end, one line of research focuses on first-order methods for this kind of problem \cite{Esser10}. In particular, a combination of operator splitting and homogeneous self-dual embedding was recently proposed \cite{Boyd16} and was shown to beat state-of-the-art interior point methods on large instances.

Since our proposed method also falls into the class of first-order methods, the main goal of this paper is to assess whether it is able to compete with interior point methods as well.

\subsection{Organization of the paper}
The paper consists of two parts.

In the first part, we restate the results from \cite{Fujishige08} in the setting of conic LPs. For this, we introduce conic zonotopes in section 2 and show how to adapt the LP-Newton method in section~3. In section 4, we recall how to use the minimum-norm-point algorithm for the projection step of the LP-Newton method.

In the second part, we look at widespread convex cones and how they interact with the proposed method. In section 5, we give some conditions for $\K$ to be exploited by the algorithm and consider the nonnegative orthant $\R^n_+$, the Lorentz-cone $\mc{L}_n$ and the cone of positive semidefinite matrices $\mc{S}^n_+$. Finally, section 6 reports about some experiments on $\mc{L}_n$ and $\mc{S}^n_+$, illustrating the behaviour of the algorithm in a setting which was not considered in the original paper \cite{Fujishige08}.

\section{Preliminaries}

\subsection{Conic zonotopes}

Throughout this paper, let $\K\subseteq \R^n$ be a proper convex self-dual cone. We can then define a partial order $\keq$ on $\R^n$ by demanding for all $\vect{x},\vect{y}\in \R^n$ that 
\begin{equation*}
 \vect{y}\keq \vect{x} \quad \Leftrightarrow\quad \vect{x}-\vect{y}\in \K,
\end{equation*}
where $\vect{0}\keq \vect{x}$ is short for membership $\vect{x}\in\K$ and extend 
\begin{equation*}
 \vect{y}<_{\K} \vect{x} \quad \Leftrightarrow\quad \vect{x}-\vect{y}\in \K,\quad \vect{y}\neq \vect{x}.
\end{equation*}

\noindent For any two points $\vect{l}\keq \vect{u}\in \R^n$, denote their interval with respect to $\keq$ by 
\begin{equation*}
[\vect{l},\vect{u}]_{\K}:=\left\{ \vect{x}\in \R^n \,\middle|\, \vect{l} \keq \vect{x} \keq \vect{u} \right\}.
\end{equation*}
Throughout this paper, we will assume that $l<_{\K} u$, which implies that
\begin{equation*}
\conv(\{\vect{l},\vect{u}\})\subseteq [\vect{l},\vect{u}]_{\K}
\end{equation*}
is nonempty and nontrivial. Now let $\matr{A}\in \R^{m\times n}$ and define 
\begin{equation*}
Z = \left\{ \vect{z} \,\middle|\, \vect{z}=\matr{A}\vect{x}, \vect{x}\in [\vect{l},\vect{u}]_{\K}\right\},
\end{equation*}
which we will call a \emph{$\K$-zonotope} in $\R^m$. This generalizes the already established \emph{zonotopes} \cite{Ziegler95}, which can be defined as $\R_+^n$-zonotopes in our setting.

\begin{remark}

Optimizing a linear function $\vect{c}$ over $Z$ is just as hard as optimizing a linear function over $[\vect{l},\vect{u}]_{\K}$, since
\begin{equation*}
\max\left\{ \la \vect{c},\vect{z}\ra \,\middle|\,\vect{z}\in Z\right\}   = \max\left\{ \la \matr{A}^\T \vect{c}, \vect{x}\ra \I \vect{x}\in [\vect{l},\vect{u}]_{\K} \right\}
\end{equation*}

\end{remark}

\subsection{CLP reformulation}

The standard form of a conic linear program, or CLP for short, is
\begin{equation*}
\max \left\{ \la \vect{c},\vect{x}\ra \I  \matr{A}\vect{x}=\vect{b},\; \vect{x}\in \K \right\}.
\end{equation*}
In the following, we will instead consider the following CLP 
\begin{equation}
\max \left\{ \la \vect{c},\vect{x}\ra \I  \matr{A}\vect{x}=\vect{b},\; \vect{x}\in [\vect{l},\vect{u}]_{\K} \right\},\tag{Box-CLP} \label{eq:CLP}
\end{equation}
which is slightly more restrictive than the standard form. Given appropriate bounds on the feasible region, which are often available or can be easily computed, the standard form can be reformulated into the form of \eqref{eq:CLP}.

\noindent Following \cite{Fujishige08}, we first encode the constraint by defining an $(m+1)\times n$ matrix 
\begin{equation*}
\bar{\matr{A}}=\bpm \matr{A} \\ \vect{c}^\T \epm
\end{equation*}
together with a $\K$-zonotope
\begin{equation*}
\bar{Z} = \left\{ \vect{z} \I \vect{z}=\bar{\matr{A}}\vect{x},\;  \vect{x}\in [\vect{l},\vect{u}]_{\K}\right\}
\end{equation*}
and the line 
\begin{equation*}
L = \left\{ \bpm \vect{b}\\ \gamma\epm  \,\middle|\, \gamma\in \R \right\}.
\end{equation*}
Using this notation, \eqref{eq:CLP} can be restated as
\begin{equation*}
\gamma^*=\max\left\{ \gamma  \I  \bpm \vect{z} \\ \gamma \epm  \in L\cap \bar{Z}\right\}, \tag{CLP$'$}\label{eq:CLP2}
\end{equation*}
where $\gamma\in \R$. The rest of this paper will be concerned with the question of how to solve \eqref{eq:CLP2}.


\section{The Method}

Let 
\begin{equation}
\gamma_0 = \max\left\{ \la \vect{c},\vect{x}\ra \I \vect{x}\in [\vect{l},\vect{u}]_{\K}\right\}, \label{eq:gamma_0}
\end{equation}
which is clearly an upperbound for \eqref{eq:CLP2}. Furthermore, for any closed convex set $C\subseteq\R^n$, let $\pi_C:~\R^n\rightarrow C$ denote the Euclidean projection onto $C$ given by
\begin{equation*}
\pi_C(\vect{x})=\argmin \left\{  \|\vect{x}-\vect{y}\|_2 \I \vect{y}\in C \right\}
\end{equation*}
and define
\begin{equation*}
\bar{\vect{b}}(\gamma)=\bpm \vect{b}\\ \gamma\epm\quad \forall \gamma\in \R
\end{equation*}
to parametrize $L$ and simplify notation. 
Next, consider the continous, convex scalar function $g: \R \rightarrow \R$ given by 
\begin{equation*}
\gamma \mapsto g(\gamma) := \left\| \bar{\vect{b}}(\gamma) - \pi_{\bar{Z}}(\bar{\vect{b}}(\gamma))\right\|_2 = \dist(\bar{\vect{b}}(\gamma),\bar{Z}).
\end{equation*}
Then by definition, $g(\gamma)=0$ if and only if $\bar{\vect{b}}(\gamma)\in \bar{Z}$, and the set of zeros of $g$ parametrizes the feasible set $L\cap \bar{Z}$ of \eqref{eq:CLP2} via $\gamma \mapsto \bar{\vect{b}}(\gamma)$. In particular, the zeros of $g$ coincide with the values that are attained by the objective of \eqref{eq:CLP2}, and we get the following characterization. 

\begin{lemma}
The optimal value $\gamma^*$ of \eqref{eq:CLP2} is given as the maximal zero
\begin{equation}
\max\left\{ \gamma\in \R \I g(\gamma)=0\right\}. \tag{MZ}\label{eq:MZ}
\end{equation}
\end{lemma}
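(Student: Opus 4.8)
The plan is to show the two inclusions between the zero set of $g$ and the set of objective values attained by feasible points of \eqref{eq:CLP2}, and then take maxima. First I would unwind the definitions: $g(\gamma) = \dist(\bar{\vect{b}}(\gamma), \bar Z) = 0$ holds exactly when $\bar{\vect{b}}(\gamma) \in \bar Z$, since $\bar Z$ is closed (it is the image of the compact set $[\vect{l},\vect{u}]_{\K}$ under the linear map $\bar{\matr{A}}$, hence compact, hence closed) and the Euclidean projection onto a nonempty closed convex set is the unique nearest point, which coincides with $\bar{\vect{b}}(\gamma)$ iff that point already lies in $\bar Z$. So $\{\gamma : g(\gamma) = 0\} = \{\gamma : \bar{\vect{b}}(\gamma) \in \bar Z\} = \{\gamma : \bar{\vect{b}}(\gamma) \in L \cap \bar Z\}$, the last equality because $\bar{\vect{b}}(\gamma) \in L$ by construction.

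Next I would identify this set with the set of attained objective values. If $\gamma$ is a zero of $g$, then $\bar{\vect{b}}(\gamma) = \bar{\matr{A}}\vect{x}$ for some $\vect{x} \in [\vect{l},\vect{u}]_{\K}$; reading off the top $m$ rows gives $\matr{A}\vect{x} = \vect{b}$, so $\vect{x}$ is feasible for \eqref{eq:CLP}, and reading off the last row gives $\la \vect{c},\vect{x}\ra = \gamma$, so $\gamma$ is attained. Conversely, if $\vect{x} \in [\vect{l},\vect{u}]_{\K}$ satisfies $\matr{A}\vect{x} = \vect{b}$, then $\bar{\matr{A}}\vect{x} = \bar{\vect{b}}(\la\vect{c},\vect{x}\ra) \in \bar Z \cap L$, so $\gamma := \la\vect{c},\vect{x}\ra$ is a zero of $g$. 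Hence the zero set of $g$ equals the set of objective values of feasible points, and \eqref{eq:MZ} asks precisely for the supremum of that set, which by definition of \eqref{eq:CLP2} is $\gamma^*$.

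Finally I would address why the maximum in \eqref{eq:MZ} is attained — i.e.\ why the supremum is achieved — so that writing $\max$ rather than $\sup$ is justified and $\gamma^* < \infty$. This follows from compactness: the feasible set $\{\vect{x} \in [\vect{l},\vect{u}]_{\K} : \matr{A}\vect{x} = \vect{b}\}$ of \eqref{eq:CLP} is a closed subset of the compact set $[\vect{l},\vect{u}]_{\K}$ (here $\vect{l} <_{\K} \vect{u}$ guarantees the box is bounded, and it is closed since $\K$ is), hence compact, and $\la\vect{c},\cdot\ra$ is continuous, so if the feasible set is nonempty the maximum is attained; equivalently, $\{\gamma : g(\gamma) = 0\}$ is the continuous image of a compact set, hence compact, hence contains its supremum. (If the feasible set is empty, both sides are the supremum of the empty set, and the statement is vacuous or interpreted as $-\infty$.)

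I do not anticipate a serious obstacle here; the only point requiring a little care is the closedness/compactness of $\bar Z$ and of the feasible set, which is what makes both the projection characterization $g(\gamma)=0 \iff \bar{\vect{b}}(\gamma)\in\bar Z$ and the attainment of the maximum go through — and that rests on the standing assumption $\vect{l} <_{\K} \vect{u}$ together with $\K$ being a closed (proper) cone.
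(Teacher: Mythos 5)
Your argument is correct and follows essentially the same route as the paper: the paper establishes (in the two sentences preceding the lemma) that $g(\gamma)=0$ iff $\bar{\vect{b}}(\gamma)\in\bar{Z}$ and that the zeros of $g$ parametrize $L\cap\bar Z$, hence coincide with attained objective values. You simply spell out the details that the paper leaves implicit — closedness/compactness of $\bar Z$ and of the feasible set (via boundedness of $[\vect{l},\vect{u}]_{\K}$ for a proper cone) so that $\dist(\cdot,\bar Z)=0$ is equivalent to membership and so that the supremum is attained — which is a welcome bit of rigor but not a different proof.
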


The idea of the CLP-Newton method is therefore to use $\gamma_0$ as the starting point for a generalized Newton method that solves \eqref{eq:MZ}. Since $g$ is convex, this ensures that we will converge towards $\gamma^*$ from above. In particular, since $g$ is in general non-differentiable, a subdifferential must be used instead of the usual derivative, which can be extracted from a projection onto $\bar{Z}$, as shown in the following lemma.

\begin{lemma}\label{lem:subdifferential_g}
For all $\gamma\in \R$ we have  
\begin{equation*}
\frac{ \gamma -\left\la \e[m+1],\pi_{\bar{Z}}(\vect{\bar{b}}(\gamma))\right\ra }{\dist\big(\vect{\bar{b}}(\gamma),\bar{Z}\big)} \in \partial g(\gamma).
\end{equation*}
\end{lemma}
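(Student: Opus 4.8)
The plan is to exhibit the claimed quantity as the slope of an affine function that lies below $g$ everywhere and touches it at the given $\gamma$; since $g$ is convex, any such slope is a subgradient. Write $p := \pi_{\bar Z}(\bar{\vect b}(\gamma))$ for the projection of the current line point, and recall $g(\gamma) = \|\bar{\vect b}(\gamma) - p\|_2$. The first step is to produce, for an arbitrary $\gamma' \in \R$, a lower bound on $g(\gamma')$ in terms of $p$. Since $p \in \bar Z$ and $g(\gamma') = \dist(\bar{\vect b}(\gamma'),\bar Z)$ is an infimum over $\bar Z$, we certainly have $g(\gamma') \le \|\bar{\vect b}(\gamma') - p\|_2$; but that is the wrong direction. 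The right tool is the supporting-hyperplane characterization of the Euclidean projection: for a closed convex set $C$ and $x \notin C$, the vector $v := x - \pi_C(x)$ satisfies $\la v, y - \pi_C(x)\ra \le 0$ for all $y \in C$, so the affine function $y \mapsto \la v, y - \pi_C(x)\ra / \|v\|_2$ is $\le 0$ on $C$, equals $\|v\|_2 = \dist(x,C)$ at $y = x$, and is $1$-Lipschitz; hence it is a global lower bound for $\dist(\cdot, C)$ as that distance function is convex and this affine map is dominated by it on $C$ and matches the value and is a subgradient at $x$. (Equivalently: $\dist(\cdot,C)$ is convex and $v/\|v\|_2 \in \partial \dist(x,C)$ by a standard computation.)

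With that in hand, apply it with $C = \bar Z$, $x = \bar{\vect b}(\gamma)$, $v = \bar{\vect b}(\gamma) - p$, so that for every $\gamma' \in \R$,
\begin{equation*}
g(\gamma') \;\ge\; \frac{\big\la \bar{\vect b}(\gamma) - p,\; \bar{\vect b}(\gamma') - p\big\ra}{\|\bar{\vect b}(\gamma) - p\|_2}.
\end{equation*}
Now I would compute the right-hand side as an affine function of $\gamma'$. Since $\bar{\vect b}(\gamma') = \bar{\vect b}(\gamma) + (\gamma' - \gamma)\e[m+1]$, we get $\bar{\vect b}(\gamma') - p = (\bar{\vect b}(\gamma) - p) + (\gamma' - \gamma)\e[m+1]$, hence
\begin{equation*}
\big\la \bar{\vect b}(\gamma) - p,\; \bar{\vect b}(\gamma') - p\big\ra = \|\bar{\vect b}(\gamma) - p\|_2^2 + (\gamma' - \gamma)\big\la \bar{\vect b}(\gamma) - p,\; \e[m+1]\big\ra.
\end{equation*}
Dividing by $\|\bar{\vect b}(\gamma) - p\|_2 = g(\gamma) = \dist(\bar{\vect b}(\gamma),\bar Z)$ gives
\begin{equation*}
g(\gamma') \;\ge\; g(\gamma) + (\gamma' - \gamma)\,\frac{\la \bar{\vect b}(\gamma) - p,\ \e[m+1]\ra}{\dist(\bar{\vect b}(\gamma),\bar Z)},
\end{equation*}
and finally $\la \bar{\vect b}(\gamma) - p, \e[m+1]\ra = \la \bar{\vect b}(\gamma), \e[m+1]\ra - \la p, \e[m+1]\ra = \gamma - \la \e[m+1], p\ra$ because the last coordinate of $\bar{\vect b}(\gamma)$ is $\gamma$. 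This is exactly the subgradient inequality for $g$ at $\gamma$ with the stated slope, which proves the claim.

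The one place needing care is the degenerate case $\dist(\bar{\vect b}(\gamma),\bar Z) = 0$, i.e. $\bar{\vect b}(\gamma) \in \bar Z$, where the displayed expression is $0/0$ and the argument above divides by zero. I would handle this separately: when $g(\gamma) = 0$, $\gamma$ is a minimizer of the nonnegative function $g$, so $0 \in \partial g(\gamma)$, and one should check the statement is intended (or can be read) to cover this by the convention that the fraction is $0$ there, or simply restrict the claim to $\gamma$ with $\bar{\vect b}(\gamma)\notin\bar Z$ — in the algorithm the Newton iterates stay strictly above $\gamma^*$, so $g$ is strictly positive at every point where the lemma is invoked. Besides that edge case, the proof is the routine "projection residual is a subgradient of the distance function" fact specialized to the one-parameter line $L$; the only genuine computation is the two-line expansion of the inner product using $\bar{\vect b}(\gamma') - \bar{\vect b}(\gamma) = (\gamma'-\gamma)\e[m+1]$.
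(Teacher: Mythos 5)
Your argument is correct and relies on the same two facts as the paper's proof --- that $(\vect{x}-\pi_{\bar Z}(\vect{x}))/\dist(\vect{x},\bar Z)$ is a subgradient of the distance function, and that precomposing with the affine map $\gamma\mapsto\bar{\vect b}(\gamma)$ picks out the $(m+1)$-st component --- the only difference being that you unroll the chain rule for subdifferentials and verify the resulting subgradient inequality $g(\gamma')\geq g(\gamma)+(\gamma'-\gamma)h$ by hand rather than citing it. Your remark about the degenerate case $\dist(\bar{\vect b}(\gamma),\bar Z)=0$, where the displayed quotient reads $0/0$, is a fair point that the paper does not address; as you observe, $0\in\partial g(\gamma)$ there since $\gamma$ minimizes the nonnegative function $g$.
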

\begin{proof}
The function $g(\gamma)=\dist\big(\vect{\bar{b}}(\gamma),\bar{Z}\big)$ is the composition of an affine map
\begin{equation*}
\vect{\bar{b}}(\gamma) = \bpm \vect{0} \\ 1 \epm  \gamma + \bpm \vect{b} \\ 0 \epm
\end{equation*}
and a distance function, so we can use the chain rule for subderivatives \cite[Ch.~A]{Rockafellar09} to yield
\begin{equation*}
\partial g(\gamma) = \e[m+1]^\T \cdot \partial_{\vect{x}=\vect{\bar{b}}(\gamma)} \dist\big(\vect{x},\bar{Z}\big).
\end{equation*}
For any convex set $C$, we have
\begin{equation*}
\frac{\vect{x}-\pi_{C}(\vect{x})}{\dist\big(\vect{x},C\big)} \in \partial \dist(\vect{x}, C),
\end{equation*}
and using $C=\bar{Z}$ shows
\begin{equation*}
\frac{ \gamma -\left\la \e[m+1],\pi_{\bar{Z}}(\vect{\bar{b}}(\gamma))\right\ra }{\dist\big(\vect{\bar{b}}(\gamma),\bar{Z}\big)}
=
\frac{\left\la \e[m+1], \vect{\bar{b}}(\gamma)-\pi_{\bar{Z}}(\vect{\bar{b}}(\gamma))\right\ra }{\dist\big(\vect{\bar{b}}(\gamma),\bar{Z}\big)} \in \partial g(\gamma).
\end{equation*}
\end{proof}

With the extraction of a point from the subdifferential of $g$ taken care of, we can state the method.

\begin{algorithm}[H]\label{alg:CPN}
\caption{The CLP-Newton Method (CLPN)} 
\SetKw{NOT}{not}
\SetKw{OR}{or}

\SetKwFunction{F}{F}

\KwData{Data $\matr{A}, \vect{b}, \vect{c}, \vect{l}, \vect{u}$ for \eqref{eq:CLP2}, error tolerance $\varepsilon$.}
\KwResult{Approximate solution $\vect{x}_k$ or detection of infeasibility of \eqref{eq:CLP2}.}

Compute $\gamma_0 = \max\left\{ \la \vect{c},\vect{x}\ra \I \vect{x}\in [\vect{l},\vect{u}]_{\K}\right\}$\;

\For{$k=1,2,\ldots$}{
Find $\vect{x}_k$ such that $\bar{\matr{A}}\vect{x}_k=\pi_{\bar{Z}}(\bar{\vect{b}}(\gamma_{k-1}))$\;
\If{$\left\|\bar{\matr{A}}\vect{x}_k - \bar{\vect{b}}(\gamma_{k-1})\right\|_2 < \varepsilon$}{\Return{$\vect{x}_k$}\;}
Set $(\vect{z}_k^\T, \zeta_k)^\T = \bar{\matr{A}}\vect{x}_k$\;
\If{$\zeta_k \geq \gamma_{k-1}$}{\Return{``\eqref{eq:CLP2} is infeasible''}}
Compute $\gamma_{k}=\zeta_k- \| \vect{b}-\vect{z}_k \|^2_2 /(\gamma_{k-1}-\zeta_k)$\;
}
\end{algorithm}

Algorithm~\ref{alg:CPN} starts by initializing $\gamma_0$ and then proceeds to iterate by first checking stopping criteria and then performing a Newton step. In the $k$-th step, the current iterate $\gamma_{k-1}$ is processed as follows.  

After computing $g(\gamma_{k-1})$, the value $\gamma_{k-1}$ is accepted as a zero of $g$ if $g(\gamma_{k-1})\in [0,\varepsilon)$ for a given precision $\varepsilon$, thus terminating the algorithm. Otherwise, if the preliminary result $\zeta_k$ indicates that the minimum of $g$ was passed, then $g$ has no zeros, and the algorithm terminates with infeasibility.

Only if neither of these conditions is satsified, a new iterate $\gamma_k$ can be computed by performing a Newton step. This is done by choosing $h_k\in \partial g(\gamma_{k-1})$ as in Lemma \ref{lem:subdifferential_g} in the recursion 
\begin{equation*}
\gamma_k = \gamma_{k-1} - \frac{g(\gamma_{k-1})}{h_k}
\end{equation*}
to get 
\begin{equation}
\gamma_k = \gamma_{k-1} - \frac{\dist(\vect{\bar{b}}(\gamma_{k-1}), \bar{Z})^2}{\gamma_{k-1}-\zeta_k} = \zeta_k- \frac{\| \vect{b}-\vect{z}_k \|^2_2}{\gamma_{k-1}-\zeta_k}. \label{eq:gamma_update}
\end{equation}

The Newton step can also be understood geometrically by noting that
\begin{equation*}
H_k= \left\{ \bpm \vect{z} \\ \zeta \epm \in \R^{m+1} \I 
\left\la \bpm \vect{z} \\ \zeta \epm - \bpm \vect{z}_k \\ \zeta_k \epm , \bpm \vect{b} \\ \gamma_{k-1} \epm - \bpm \vect{z}_k \\ \zeta_k \epm \right\ra =0    \right\}
\end{equation*}
is a supporting hyperplane of $\bar{Z}$ at $\pi_{\bar{Z}}(\bar{b}(\gamma_{k-1}))$. 
As a consequence, any feasible point in $L\cap \bar{Z}$ is contained in the halfspace defined by $H_k$ which does not contain $\vect{\bar{b}}(\gamma_{k-1})$, and \eqref{eq:gamma_update} computes the intersection $L\cap H_k$. Figure \ref{fig:newton_step} compares how both approaches arrive at \eqref{eq:gamma_update}.

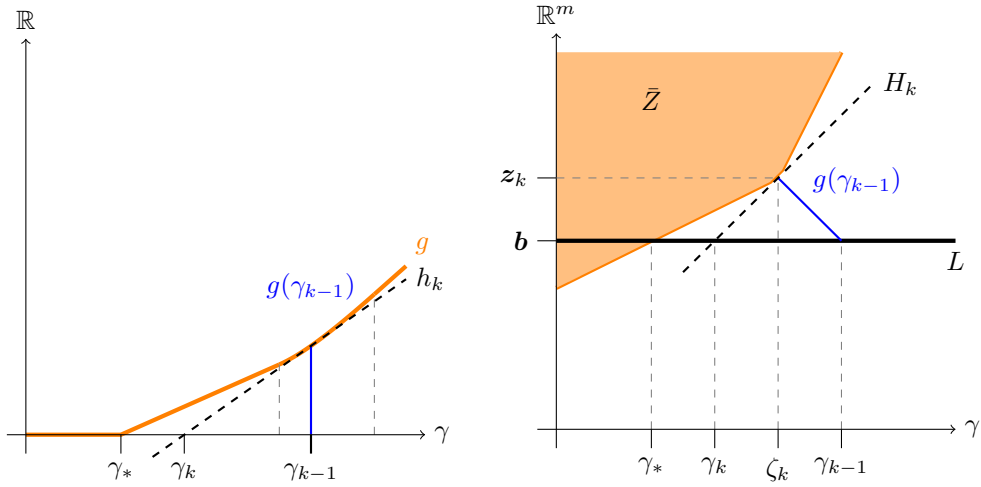
\begin{figure}[!htb]
\centering
\begin{tabular}{c c}

\begin{tikzpicture}[scale=2.5,domain=1:3]

    \draw[ultra thick, color=orange, domain=1:7/3] 
     plot (\x,{max( 0 , 1/sqrt(5)*(\x-1.5) , 2/sqrt(5)*(\x-2)  )}); 
    \draw[ultra thick, color=orange, domain=7/3:17/6] 
     plot (\x,{sqrt(173/36-13/3*\x+\x*\x)});
    \draw[ultra thick, color=orange, domain=17/6:3] 
     plot (\x,{2/sqrt(5)*(\x-2)});
    \draw (3,1) node[right, color=orange] {$g$};
	\draw [dashed, color=gray](7/3,0) -- (7/3,{sqrt(5)/6});
	\draw [dashed, color=gray](17/6,0) -- (17/6,{sqrt(5)/3});
	\draw [thick, color=blue](2.5,0) -- (2.5,{sqrt(2)/3});
    \draw [thick, dashed] (10/6,{-sqrt(2)/12}) -- (3,{sqrt(2)*7/12}) node [right] {$h_k$};
	\draw (2.5,2/3) node[above, color=blue] {$g(\gamma_{k-1})$}; 
	\draw [thick] (2.5,0) -- (2.5,-0.1) node[below] {$\gamma_{k-1}$};
	\draw (11/6,0) -- (11/6,-0.1) node[below] {$\gamma_{k}$};			
	\draw (1.5,0) -- (1.5,-0.1) node[below] {$\gamma_*$};
    \draw[->] (0.9,0) -- (3.1,0) node[right] {$\gamma$};
    \draw[->] (1,-0.1) -- (1,2.1) node[above] {$\R$};
\end{tikzpicture}

&

\begin{tikzpicture}[scale=2.5,domain=1:2.5]

    \draw[ultra thick, color=orange] plot (\x,{max(0.5*\x+0.25, 2*\x-3)}); 
	\fill [color=orange!50, domain=1:2.5, variable=\x]
      (1, 2)
      -- plot (\x,{max(0.5*\x+0.25, 2*\x-3)})
      -- (2.5, 2)
      -- cycle;    
    \draw (1.5,1.75) node {$\bar{Z}$};
	\draw [dashed, color=gray](1.5,0)  -- (1.5,1);
	\draw [dashed, color=gray](11/6,0) -- (11/6,1);	
    \draw [dashed, color=gray](13/6,0) -- (13/6,4/3);	
	\draw [dashed, color=gray](2.5,0)  -- (2.5,1);	
	\draw (1.5,0)  -- (1.5,-0.1)  node[below] {$\gamma_*$};
	\draw (11/6,0) -- (11/6,-0.1) node[below] {$\gamma_{k}$};	
	\draw (13/6,0) -- (13/6,-0.1) node[below] {$\zeta_{k}$};		
    \draw (2.5,0)  -- (2.5,-0.1)  node[below] {$\gamma_{k-1}$};	
    \draw[->] (1,-0.1) -- (1,2.1) node[above] {$\R^m$};    
    \draw [ultra thick]        (1,1)   -- (3.1,1)    node[below] {$L$};
    \draw                      (1,1)   -- (0.9,1)    node[left]  {$\vect{b}$};
    \draw [dashed, color=gray] (1,4/3) -- (13/6,4/3);
    \draw                      (1,4/3) -- (0.9,4/3)  node[left]  {$\vect{z}_k$};
    \draw[->] (0.9,0) -- (3.1,0) node[right] {$\gamma$};
    \draw (2.3,1.2) node[above right, color=blue] {$g(\gamma_{k-1})$};    
    \draw [thick, color=blue](2.5,1) -- (13/6, 4/3);
    \draw [dashed , thick] (10/6, 5/6) -- (16/6, 11/6) node[right] {$H_k$};    
\end{tikzpicture}


\end{tabular}

\caption[Update step for the Conic LP-Newton Method]
{Visualization of the update step \eqref{eq:gamma_update}. Left: Newton step using $g$. Right: Geometric deduction from supporting hyperplane $H_k$.}
\label{fig:newton_step}
\end{figure}

The following results from \cite{Fujishige08} are still valid. 

\begin{lemma}[\cite{Fujishige08}]
The following statements hold for all values of $k$ attained in Algorithm~\ref{alg:CPN}.
\begin{enumerate}[(i)]
\item If $\gamma_k> \zeta_k$, then $\zeta_k > \gamma_{k+1}$.
\item If $\gamma_k< \zeta_k$ or $\gamma_k= \zeta_k$ and $\vect{z}_k \neq \vect{b}$, then (CLPN) correctly assesses infeasibility of \eqref{eq:CLP2}.
\item If $\gamma_k = \zeta_k$ and $\vect{z}_k=\vect{b}$, then $\gamma_k$ is equal to the optimal value of \eqref{eq:CLP2}.
\end{enumerate}

\end{lemma}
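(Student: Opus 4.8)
The plan is to check that the argument of \cite{Fujishige08} carries over verbatim: it uses nothing specific to $\R^n_+$, only convexity and continuity of $g$, the subgradient formula of Lemma~\ref{lem:subdifferential_g}, the elementary separation property of the Euclidean projection (for closed convex $C$, every $\vect x$ and every $\vect y\in C$ one has $\la\vect x-\pi_C(\vect x),\vect y-\pi_C(\vect x)\ra\le 0$), and compactness of $[\vect l,\vect u]_{\K}$ — which holds because a proper cone is pointed, so $\bar Z$ is compact and all projections, supporting hyperplanes, and the maximum defining $\gamma^*$ exist. Throughout I write $(\vect z_k,\zeta_k)=\pi_{\bar Z}(\bar{\vect b}(\gamma_k))$ for the projection attached to the current iterate $\gamma_k$ (so that $\gamma_{k+1}$ is the iterate produced from it), so that $g(\gamma_k)^2=\|\vect b-\vect z_k\|_2^2+(\gamma_k-\zeta_k)^2$ and, whenever it is computed, $\gamma_{k+1}=\zeta_k-\|\vect b-\vect z_k\|_2^2/(\gamma_k-\zeta_k)$ by \eqref{eq:gamma_update}.

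The one nontrivial ingredient I would establish first, by induction on $k$, is the invariant that every attained iterate is an upper bound for the whole zero set of $g$, i.e.\ $\gamma_k\ge\gamma'$ whenever $g(\gamma')=0$. The base case holds because $g(\gamma')=0$ means there is $\vect x'\in[\vect l,\vect u]_{\K}$ with $\matr{A}\vect x'=\vect b$ and $\la\vect c,\vect x'\ra=\gamma'$, so $\gamma'\le\gamma_0=\max\{\la\vect c,\vect x\ra\mid\vect x\in[\vect l,\vect u]_{\K}\}$. For the step, if $\gamma_k$ is attained then the update \eqref{eq:gamma_update} was applied, so $\zeta_{k-1}<\gamma_{k-1}$; in particular $g(\gamma_{k-1})>0$, and by Lemma~\ref{lem:subdifferential_g} the number $h:=(\gamma_{k-1}-\zeta_{k-1})/g(\gamma_{k-1})$ is a strictly positive element of $\partial g(\gamma_{k-1})$. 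The subgradient inequality $0=g(\gamma')\ge g(\gamma_{k-1})+h(\gamma'-\gamma_{k-1})$ together with the induction hypothesis $\gamma'\le\gamma_{k-1}$ then gives $\gamma'\le\gamma_{k-1}-g(\gamma_{k-1})/h=\gamma_k$. Geometrically this is just the fact that a Newton step on a convex function replaces $g$ by a supporting line lying below its graph, hence cannot overshoot a root; this is the one place where convexity is used and the step I expect to take most care.

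With the invariant in hand the three items are short. For (i): $\gamma_k>\zeta_k$ gives $\gamma_k-\zeta_k>0$, so \eqref{eq:gamma_update} yields $\gamma_{k+1}=\zeta_k-\|\vect b-\vect z_k\|_2^2/(\gamma_k-\zeta_k)\le\zeta_k$, strictly unless $\vect z_k=\vect b$, in which case $\gamma_{k+1}=\zeta_k$ is already a zero of $g$ and the algorithm stops at the next step. For (iii): $\gamma_k=\zeta_k$ together with $\vect z_k=\vect b$ says exactly that $\pi_{\bar Z}(\bar{\vect b}(\gamma_k))=\bar{\vect b}(\gamma_k)$, i.e.\ $g(\gamma_k)=0$, so $\gamma_k$ is an objective value of \eqref{eq:CLP2} and hence $\gamma_k\le\gamma^*$; the invariant gives $\gamma_k\ge\gamma^*$, so $\gamma_k=\gamma^*$, which by \eqref{eq:MZ} is the optimal value of \eqref{eq:CLP2}.

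For (ii) I would argue by contradiction from the supporting hyperplane of $\bar Z$ at $\pi_{\bar Z}(\bar{\vect b}(\gamma_k))$. In both listed cases $g(\gamma_k)^2=\|\vect b-\vect z_k\|_2^2+(\gamma_k-\zeta_k)^2>0$, so $\bar{\vect b}(\gamma_k)\notin\bar Z$ and the projection inequality holds for every point of $\bar Z$. If \eqref{eq:CLP2} were feasible, a feasible point would have the form $\bar{\vect b}(\gamma')$ with $g(\gamma')=0$; substituting it gives $\|\vect b-\vect z_k\|_2^2+(\gamma'-\zeta_k)(\gamma_k-\zeta_k)\le 0$. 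By the invariant $\gamma'\le\gamma_k$, and by hypothesis $\gamma_k\le\zeta_k$, so $\gamma'-\zeta_k\le\gamma_k-\zeta_k\le 0$ and the second term is nonnegative; this forces $\vect z_k=\vect b$. But then $(\vect b,\zeta_k)=\pi_{\bar Z}(\bar{\vect b}(\gamma_k))\in\bar Z$, i.e.\ $g(\zeta_k)=0$, so the invariant gives $\gamma_k\ge\zeta_k$; with $\gamma_k\le\zeta_k$ this excludes the case $\gamma_k<\zeta_k$ and contradicts $\vect z_k\ne\vect b$ in the case $\gamma_k=\zeta_k$. Hence \eqref{eq:CLP2} is infeasible, and Algorithm~\ref{alg:CPN} reports this correctly because the test $\zeta_k\ge\gamma_k$ fires in both cases. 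Beyond the invariant I do not anticipate any obstacle: each step is the polyhedral argument of \cite{Fujishige08} word for word, the only genuinely new observation being that properness of $\K$ is precisely what keeps $[\vect l,\vect u]_{\K}$ and $\bar Z$ compact.
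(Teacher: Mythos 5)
The paper does not actually supply a proof of this lemma---it is stated with attribution to \cite{Fujishige08} and the remark that it ``is still valid''---so there is no internal argument to compare against. Your reconstruction is sound and is the expected one: you isolate the invariant $\gamma_k\ge\gamma'$ for every zero $\gamma'$ of $g$, derive it from Lemma~\ref{lem:subdifferential_g} together with the subgradient inequality and the positivity of the chosen subgradient, and then deduce (i)--(iii). Two small remarks. First, the induction hypothesis $\gamma'\le\gamma_{k-1}$ that you invoke in the inductive step is not actually needed: the subgradient inequality $0\ge g(\gamma_{k-1})+h(\gamma'-\gamma_{k-1})$ together with $h>0$ already gives $\gamma'\le\gamma_{k-1}-g(\gamma_{k-1})/h=\gamma_k$ directly, so the ``invariant'' is in fact a one-shot consequence of convexity and needs no induction at all (the base case $\gamma'\le\gamma_0$ you cite is of course still useful as a sanity check). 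Second, your proof of (i) correctly only yields $\gamma_{k+1}\le\zeta_k$, with equality precisely when $\vect z_k=\vect b$; you flag this and observe that in the equality case $\gamma_{k+1}=\zeta_k$ is already a root and the method halts immediately afterwards. The lemma as transcribed asserts the strict inequality, so your version is, if anything, more careful than the cited statement---it makes explicit the degenerate case the original formulation glosses over (and which does not affect the algorithm's correctness). Your observation that the polyhedral argument of \cite{Fujishige08} uses nothing beyond convexity, the projection characterization, and compactness of $\bar Z$---the last being exactly what properness of $\K$ guarantees---is the right justification for why the lemma transfers to the conic setting.
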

%
%
%
\begin{remark}
Since $g$ is convex, Algorithm~\ref{alg:CPN} falls into the class of generalized Newton methods, which immediately shows asymptotic convergence in case that \eqref{eq:CLP2} is feasible and finite termination in case that there is no feasible solution. 
\end{remark}


\section{The Minimum-Norm-Point Algorithm}\label{sec:minimum-norm-point}

Algorithm~\ref{alg:CPN} can only be as efficient as its routines to compute $\gamma_0$ and $\pi_{\bar{Z}}$, and we would like to make sure that both operations can be done fairly efficient. To this end, we will present Algorithm~\ref{alg:FW}, a version of the minimum-norm-point algorithm adapted from  \cite{Bach11,Jaggi13}, in order to reduce the computation of $\pi_{\bar{Z}}$ to several CLPs like \eqref{eq:gamma_0}.

\begin{algorithm}[H]\label{alg:FW}
\caption{Minimum-Norm-Point Algorithm MNP for $\dist(y,\bar{Z})$ } 
\SetKw{NOT}{not}
\SetKw{OR}{or}

\SetKwFunction{F}{F}

\KwData{Data $\vect{l}, \vect{u}, \bar{\matr{A}}$ for \eqref{eq:CLP2}, $\bar{\vect{b}}\in \R^{m+1}$, $\vect{s}_0\in [\vect{l},\vect{u}]_{\K}$, error tolerance $\varepsilon$.}
\KwResult{$\hat{\vect{x}}\in [\vect{l},\vect{u}]_{\K}$ such that $0\leq \|\bar{\matr{A}}\hat{\vect{x}}-\bar{\vect{b}}\|_2 -\dist(\bar{\vect{b}},\bar{Z})\leq \varepsilon$.}

Set $P=\{\vect{s}_0\}$ and $k=0$\;

\For{$k=1,2,\ldots$}{
Compute $\vect{x}_k=\argmin\left\{ \|\bar{\matr{A}}\vect{x} -\bar{\vect{b}}\|_2^2 \,\middle|\, \vect{x}\in \aff(P)\right\}$\;
\If{$\vect{x}_k\in \conv(P)$}
{
Compute $\vect{s}_k = \argmin\left\{ \la \vect{s}, \bar{\matr{A}}^\T(\bar{\matr{A}}\vect{x}_{k}-\bar{\vect{b}})\ra \I  \vect{s}\in [\vect{l},\vect{u}]_{\K}\right\}$\;
\If{ $2 \la \bar{\matr{A}}(\vect{x}_{k}-\vect{s}_k), \bar{\matr{A}}\vect{x}_{k}-\bar{\vect{b}}\ra < \varepsilon$}{\Return{$\vect{x}_{k}$}\;}
\Else{Set $P = P\cup \{\vect{s}_k\}$\;}
}
\Else{
Compute $\hat{\lambda} = \max\left\{ \lambda \,\middle|\, \lambda\in[0,1],\;  \vect{x}_{k-1}+  \lambda (\vect{x}_k - \vect{x}_{k-1})\in \bar{Z}\right\}$\;
Set $\vect{x}_k = \vect{x}_{k-1}+  \hat{\lambda}(\vect{x}_k - \vect{x}_{k-1})$\;
Set $P$ to the minimal subset $P'\subseteq P$ such that $\vect{x}_k\in \conv(P')$\;
}
}
\end{algorithm}

While the exact number of iterations for MNP is an open problem, finite termination is established. We cite the following theorem from the survey  \cite{Jaggi13} about the more general class of Frank-Wolfe algorithms.

\begin{theorem}
Algorithm~\ref{alg:FW} produces a sequence $\{\vect{x}_k\}_{k\in \N}$ such that for $k>1$ and
\begin{equation*}
h_k := 2 \la \bar{\matr{A}}(\vect{x}_{k-1}-\vect{s}_k), \bar{\matr{A}}\vect{x}_{k-1}-\vect{y}\ra,
\end{equation*}
we get
\begin{equation*}
0 \leq \| \bar{\matr{A}}\vect{x}_k - \vect{y} \|_2 - \dist(\vect{y},\bar{Z}) \leq h_{k+1} \leq \frac{27\diam(\bar{Z})^2}{4(k+2)}.
\end{equation*}
In particular, the algorithm works correctly and terminates after a finite number of steps. 
\end{theorem}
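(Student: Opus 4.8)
The plan is to view Algorithm~\ref{alg:FW} as a fully‑corrective Frank–Wolfe scheme (Wolfe's minimum‑norm‑point method) for the smooth convex program
\begin{equation*}
\min\left\{ \tilde f(\vect{z}):=\|\vect{z}-\vect{y}\|_2^2 \I \vect{z}\in\bar Z\right\},
\end{equation*}
and then to push the classical Frank–Wolfe analysis of \cite{Jaggi13} through with the curvature constant of this particular $\tilde f$ computed explicitly. First I would record the set‑up: since $\K$ is pointed, the order interval $[\vect{l},\vect{u}]_{\K}$ is compact, hence so is its linear image $\bar Z=\bar{\matr{A}}[\vect{l},\vect{u}]_{\K}$, so $\diam(\bar Z)<\infty$, the minimiser $\vect{z}^\star=\pi_{\bar Z}(\vect{y})$ exists, and $\tilde f(\vect{z}^\star)=\dist(\vect{y},\bar Z)^2$. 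By the Remark of Section~2, line~5 of the algorithm produces $\vect{s}_k$ so that $\bar{\matr{A}}\vect{s}_k$ is a linear‑minimisation‑oracle answer for $\bar Z$ in the direction $\nabla\tilde f=2(\bar{\matr{A}}\vect{x}_{k-1}-\vect{y})$; consequently
\begin{equation*}
h_k=\left\la\nabla\tilde f(\bar{\matr{A}}\vect{x}_{k-1}),\bar{\matr{A}}\vect{x}_{k-1}-\bar{\matr{A}}\vect{s}_k\right\ra=\max_{\vect{z}\in\bar Z}\left\la\nabla\tilde f(\bar{\matr{A}}\vect{x}_{k-1}),\bar{\matr{A}}\vect{x}_{k-1}-\vect{z}\right\ra
\end{equation*}
is exactly the Frank–Wolfe (linearisation) duality gap at the iterate $\bar{\matr{A}}\vect{x}_{k-1}$.

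The inequality $0\le\|\bar{\matr{A}}\vect{x}_k-\vect{y}\|_2-\dist(\vect{y},\bar Z)$ is trivial since $\bar{\matr{A}}\vect{x}_k\in\bar Z$. For $\|\bar{\matr{A}}\vect{x}_k-\vect{y}\|_2-\dist(\vect{y},\bar Z)\le h_{k+1}$ I would invoke weak duality: convexity of $\tilde f$ gives, for every $\vect{z}\in\bar Z$, $\tilde f(\vect{z})-\tilde f(\vect{z}^\star)\le\la\nabla\tilde f(\vect{z}),\vect{z}-\vect{z}^\star\ra\le\max_{\vect{w}\in\bar Z}\la\nabla\tilde f(\vect{z}),\vect{z}-\vect{w}\ra$; evaluated at $\vect{z}=\bar{\matr{A}}\vect{x}_k$ this reads $\|\bar{\matr{A}}\vect{x}_k-\vect{y}\|_2^2-\dist(\vect{y},\bar Z)^2\le h_{k+1}$, and the un‑squared form follows from the elementary estimate relating $a-b$ and $a^2-b^2$ on the bounded set $\bar Z$.

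The heart of the proof is the $O(1/k)$ decay of $h_{k+1}$, which I would obtain in three steps. \emph{(1) Curvature.} Because $\nabla^2\tilde f\equiv 2\,\matr{I}$, for all $\vect{z},\vect{s}\in\bar Z$ and $\lambda\in[0,1]$,
\begin{equation*}
\tilde f\big(\vect{z}+\lambda(\vect{s}-\vect{z})\big)=\tilde f(\vect{z})-\lambda\,h(\vect{z})+\lambda^2\|\vect{s}-\vect{z}\|_2^2\le\tilde f(\vect{z})-\lambda\,h(\vect{z})+\lambda^2\diam(\bar Z)^2,
\end{equation*}
where $h(\vect{z})$ is the gap at $\vect{z}$, so the Frank–Wolfe curvature constant of $\tilde f$ over $\bar Z$ is at most $\diam(\bar Z)^2$ (up to the normalisation underlying the constant $27/4$). \emph{(2) Per‑step progress.} In an iteration with $\vect{x}_k\in\conv(P)$ the algorithm minimises $\tilde f$ exactly over $\aff(P)$, which contains the segment from the current iterate to the current oracle point, so the new iterate does at least as well as the vanilla Frank–Wolfe point with optimal step size; minimising the right‑hand side of step~(1) over $\lambda$ then yields
\begin{equation*}
\tilde f(\bar{\matr{A}}\vect{x}_k)-\tilde f(\vect{z}^\star)\le\big(\tilde f(\bar{\matr{A}}\vect{x}_{k-1})-\tilde f(\vect{z}^\star)\big)-\frac{1}{2}\,h_k\min\left\{1,\frac{h_k}{2\diam(\bar Z)^2}\right\}
\end{equation*}
(with $h_k$ the gap at $\bar{\matr{A}}\vect{x}_{k-1}$, matching the indexing of the statement). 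An iteration of the \textbf{else} branch never increases $\tilde f$ and lands on a proper face of $\conv(P)$, hence strictly shrinks $P$, so such iterations cannot occur in an unbroken block longer than $|P|$ and do not affect the rate. \emph{(3) Induction.} Plugging this recursion into the standard Frank–Wolfe induction (\cite[Thm.~1--2]{Jaggi13}) gives $\tilde f(\bar{\matr{A}}\vect{x}_k)-\tilde f(\vect{z}^\star)\le\frac{2\diam(\bar Z)^2}{k+2}$, and the gap‑versus‑suboptimality argument of \cite{Jaggi13}, together with the monotone decrease of $\tilde f(\bar{\matr{A}}\vect{x}_k)$ ensured by the corrective step, upgrades this to $h_{k+1}\le\frac{27\diam(\bar Z)^2}{4(k+2)}$ for $k>1$. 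Finite termination is then immediate: the right‑hand side tends to $0$, so for any $\veps>0$ the test in line~6 eventually triggers and the algorithm returns a point with the claimed accuracy.

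The main obstacle is step~(3): transferring an $O(1/k)$ bound on the \emph{primal} suboptimality $\tilde f(\bar{\matr{A}}\vect{x}_k)-\tilde f(\vect{z}^\star)$ to the \emph{duality gap} $h_{k+1}$ at the current iterate — rather than at a best‑so‑far or averaged iterate — while simultaneously absorbing the non‑vanilla features of the minimum‑norm‑point recursion, namely the fully‑corrective affine‑hull minimisation and the interspersed drop steps. The remaining ingredients (compactness of $\bar Z$, weak duality, and the curvature computation) are routine.
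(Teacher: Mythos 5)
The paper does not actually prove this theorem; it cites it from the Frank--Wolfe survey \cite{Jaggi13} and moves on. So there is no internal proof to compare your attempt against. That said, your reconstruction is a faithful account of how the bound is derived in \cite{Jaggi13}: cast MNP as a fully-corrective Frank--Wolfe method on $\tilde f(\vect{z})=\|\vect{z}-\vect{y}\|_2^2$ over the compact set $\bar Z$, compute the curvature constant explicitly from $\nabla^2\tilde f = 2\matr{I}$, show the corrective step does at least as well as a vanilla line-searched FW step, and then invoke the standard $O(1/k)$ induction and the primal-to-gap upgrade. Your identification of $h_k$ as the Frank--Wolfe duality gap at $\bar{\matr{A}}\vect{x}_{k-1}$ is exactly right, and your remark that the drop steps in the \textbf{else} branch strictly shrink $P$ and hence cannot delay convergence indefinitely is the correct way to absorb the non-vanilla structure.

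Two caveats are worth flagging. First, you correctly identify the crux in step~(3) — moving from a bound on $\tilde f(\bar{\matr{A}}\vect{x}_k)-\tilde f(\vect{z}^\star)$ to a bound on the gap $h_{k+1}$ at the \emph{current} iterate rather than a best-so-far iterate — but you leave it as a pointer to \cite{Jaggi13}. For a fully-corrective variant this is the step that actually needs checking, because the gap argument in \cite{Jaggi13} is stated for line-searched vanilla FW; the monotone decrease supplied by the affine-hull projection is the right ingredient, but the chain of inequalities should be written out. Second, and more substantively, your passage from the squared inequality $\|\bar{\matr{A}}\vect{x}_k-\vect{y}\|_2^2-\dist(\vect{y},\bar Z)^2\le h_{k+1}$ to the unsquared form does not follow from ``the elementary estimate relating $a-b$ and $a^2-b^2$.'' One has $a-b=\tfrac{a^2-b^2}{a+b}$ for $a\ge b\ge 0$, so $a-b\le a^2-b^2$ only when $a+b\ge 1$; precisely in the terminal regime where both $\|\bar{\matr{A}}\vect{x}_k-\vect{y}\|_2$ and $\dist(\vect{y},\bar Z)$ are small, the unsquared difference can exceed the squared one, and the claimed middle inequality fails. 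This appears to be an imprecision already present in the theorem statement (and in the return guarantee of Algorithm~\ref{alg:FW}), so the cleaner fix is to state the bound for the squared distance, or to carry the denominator $\|\bar{\matr{A}}\vect{x}_k-\vect{y}\|_2+\dist(\vect{y},\bar Z)$ explicitly; simply asserting the unsquared form is a gap.
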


\begin{remark}
The preceeding theorem also applies to other variants of the Frank-Wolfe algorithm in \cite{Jaggi13} that are able to approximate $\pi_{\bar{Z}}(\bar{\vect{b}}(\gamma_k))$. 
However, preliminary experiments have shown that the minimum-norm-point algorithm was the fastest algorithm for our purpose. For another discussion of MNP, consider \cite[Sct.~9.2]{Bach11}.
\end{remark}

We can now formalize when linear optimization over $[\vect{l},\vect{u}]_{\K}$ is ``sufficiently easy''.

\begin{definition}
$\K\subseteq \R^n$ is called \emph{suitable for the \textnormal{CLP}-Newton} method if 
the problem
\begin{equation}
\max\left\{ \la \vect{c},\vect{x}\ra \,\middle|\, \vect{x}\in [\vect{l},\vect{u}]_{\K}\subseteq \R^n  \right\} \label{eq:assumption}
\end{equation}
can be solved in time $\mathcal{O}(n^2)$.
\end{definition}

The idea behind this definition is that solving \eqref{eq:assumption} should be cheaper than solving a linear system of size $n\times n$. This way, \eqref{eq:assumption} not only takes care of $\gamma_0$, but also of $\pi_{\bar{Z}}$, since the bottleneck of the minimum-norm-point algorithm is the computation of $\vect{x}_k=\pi_{\aff(P)}(\bar{\vect{b}})$, corresponding to solving a linear system of size at most $n\times n$.

\begin{remark}
If $\{\K_i\}_{i\in I}$ is a family of cones suitable for the CLP-Newton method, so is their Cartesian product $\K=\bigotimes_{i\in I} \K_i$. In particular, such a $\K$-zonotope decomposes into several $\K_i$-zonotopes, so that \eqref{eq:assumption} can be solved in parallel for each $\K_i$, which makes the algorithm potentially much faster. 
\end{remark}


\section{Linear Optimization over K-Zonotopes}

In this section, we will give necessary conditions for \eqref{eq:assumption} being easy to solve depending on $\K$ and apply them to exemplary classes of cones. 

\subsection{Necessary conditions}

Looking at \eqref{eq:assumption}, we can make the following observation, where we will use $\bd(C)$ and $\intr(C)$ to respectively denote the boundary and interior of $C\subseteq \R^n$.

\begin{lemma}[Extreme points of {$[\vect{l},\vect{u}]_{\K}$}] \label{lem:boundary}
Problem \eqref{eq:assumption} is equivalent to
\begin{equation*}
\max\left\{ \la \vect{c},\vect{x}\ra \I \vect{x}\in \{\vect{l},\vect{u}\}\cup \left(\bd(\vect{l}+\K)\cap \bd(\vect{u}-\K)\right) \right\}.
\end{equation*}
\end{lemma}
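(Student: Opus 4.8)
The plan is to show that the optimal value of \eqref{eq:assumption} is attained at a point of the claimed set, and conversely that every point of that set is attainable as a maximizer for some objective, so that the two problems have the same optimal value for every $\vect{c}$. Since $[\vect{l},\vect{u}]_{\K}$ is compact (it is closed, being an intersection of two closed translated cones, and bounded because $\K$ is pointed), the linear function $\la\vect{c},\cdot\ra$ attains its maximum, and by standard convexity it is attained at an extreme point of $[\vect{l},\vect{u}]_{\K}$. So the core of the argument is the geometric identity
\begin{equation*}
\operatorname{ext}\big([\vect{l},\vect{u}]_{\K}\big) \subseteq \{\vect{l},\vect{u}\}\cup\big(\bd(\vect{l}+\K)\cap\bd(\vect{u}-\K)\big),
\end{equation*}
together with the observation that $\{\vect{l},\vect{u}\}$ already lies in $[\vect{l},\vect{u}]_{\K}$ so that restricting the feasible set to the right-hand side can only lower the max, while the extreme-point argument shows it cannot.

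First I would establish that any $\vect{x}\in[\vect{l},\vect{u}]_{\K}$ lying in the \emph{relative} interior of both $\vect{l}+\K$ and $\vect{u}-\K$ — equivalently, with $\vect{x}-\vect{l}\in\intr\K$ and $\vect{u}-\vect{x}\in\intr\K$ — is not extreme: for small $\varepsilon>0$ and any direction $\vect{d}$, both $\vect{x}\pm\varepsilon\vect{d}$ still satisfy $\vect{l}\keq\cdot\keq\vect{u}$, since $\intr\K$ is open and $(\vect{x}\pm\varepsilon\vect{d})-\vect{l}$, $\vect{u}-(\vect{x}\pm\varepsilon\vect{d})$ remain in $\K$; hence $\vect{x}$ is a proper midpoint and not extreme. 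So every extreme point satisfies $\vect{x}-\vect{l}\in\bd\K$ or $\vect{u}-\vect{x}\in\bd\K$, i.e. $\vect{x}\in\bd(\vect{l}+\K)\cup\bd(\vect{u}-\K)$. Second, I would upgrade this to lying in the \emph{intersection} of the two boundaries unless $\vect{x}\in\{\vect{l},\vect{u}\}$: suppose $\vect{x}-\vect{l}\in\bd\K$ but $\vect{u}-\vect{x}\in\intr\K$; I claim $\vect{x}$ is then not extreme unless $\vect{x}=\vect{l}$. Since $\vect{x}-\vect{l}\in\K$, we may write $\vect{x}=\vect{l}+t\vect{w}$ with $\vect{w}\in\K$, $t\ge 0$; the point $\vect{x}-\varepsilon\vect{w}$ still has $\vect{x}-\varepsilon\vect{w}-\vect{l}=(t-\varepsilon)\vect{w}\in\K$ for small $\varepsilon>0$ (provided $t>0$), and $\vect{u}-(\vect{x}-\varepsilon\vect{w})=(\vect{u}-\vect{x})+\varepsilon\vect{w}\in\intr\K$ for small $\varepsilon$; symmetrically $\vect{x}+\varepsilon\vect{w}-\vect{l}\in\K$ and $\vect{u}-(\vect{x}+\varepsilon\vect{w})\in\intr\K$, so again $\vect{x}$ is a midpoint and not extreme. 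If $t=0$ then $\vect{x}=\vect{l}$. The symmetric case ($\vect{u}-\vect{x}\in\bd\K$, $\vect{x}-\vect{l}\in\intr\K$) gives $\vect{x}=\vect{u}$ by the same reasoning, and the remaining case puts $\vect{x}$ in $\bd(\vect{l}+\K)\cap\bd(\vect{u}-\K)$.

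Putting these together: maximizing over $[\vect{l},\vect{u}]_{\K}$ equals maximizing over its extreme points, which are contained in $\{\vect{l},\vect{u}\}\cup(\bd(\vect{l}+\K)\cap\bd(\vect{u}-\K))\subseteq[\vect{l},\vect{u}]_{\K}$; hence the two maxima coincide. (Strictly one should also check the intermediate set is contained in $[\vect{l},\vect{u}]_{\K}$, which is immediate: $\vect{x}\in\vect{l}+\K$ means $\vect{l}\keq\vect{x}$, and $\vect{x}\in\vect{u}-\K$ means $\vect{x}\keq\vect{u}$.) The main obstacle I anticipate is the second step — promoting "on one boundary" to "on both boundaries, or an endpoint" — because one must perturb \emph{along a specific direction} ($\vect{w}$, pointing from $\vect{l}$ toward $\vect{x}$) rather than an arbitrary one, and must verify that this perturbation stays inside the other constraint; the edge case $t=0$ (forcing $\vect{x}=\vect{l}$) is what makes the endpoints $\{\vect{l},\vect{u}\}$ genuinely necessary in the statement. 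A cleaner alternative for this step is to note that if $\vect{x}-\vect{l}\in\bd\K\setminus\{0\}$ then $\vect{x}-\vect{l}$ lies on an extreme ray or a higher-dimensional face of $\K$; if it is on a $\ge 2$-dimensional face one perturbs within that face, and if on an extreme ray one perturbs along the ray as above — but the direct $\vect{w}=\vect{x}-\vect{l}$ perturbation handles all subcases uniformly, so I would present that.
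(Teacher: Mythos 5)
Your proof is correct and uses essentially the same core argument as the paper: when $\vect{x}\in\bd(\vect{l}+\K)$ with $\vect{x}\neq\vect{l}$, perturb along the ray from $\vect{l}$ through $\vect{x}$ (your $\vect{x}\pm\varepsilon\vect{w}$ is the paper's $\vect{l}+(1\pm\varepsilon')\vect{y}$ after rescaling), and use that this stays in $[\vect{l},\vect{u}]_{\K}$. The only cosmetic difference is framing — you conclude $\vect{x}$ is not an extreme point and invoke Bauer's principle, whereas the paper concludes $\la\vect{c},\vect{y}\ra=0$ directly from optimality and replaces $\vect{x}^*$ by $\vect{l}$ — but the underlying perturbation and conic reasoning are identical.
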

\begin{proof}
Since $[\vect{l},\vect{u}]_{\K}$ is convex, any optimal solution $\vect{x}^*$ will necessarily belong to 
\begin{equation*}
\bd([\vect{l},\vect{u}]_{\K})= 
\big(\bd(\vect{l}+\K)\cap (\vect{u}-\K)\big)  \cup 
\big((\vect{l}+\K)\cap \bd(\vect{u}-\K)\big).
\end{equation*}
Now assume w.l.o.g. that $\vect{x}^* \in \bd(\vect{l}+\K)\cap \intr(\vect{u}-\K)\setminus \{\vect{l}\} $, so that we can write $\vect{x}^* = \vect{l} + \vect{y}$ with $\vect{y}\in \K\setminus\{\vect{0}\}$. For small $\varepsilon>0$, we maintain
\begin{equation*}
\vect{l}+(1\pm \varepsilon)\vect{y} \in \bd(\vect{l}+\K)\cap \intr(\vect{u}-\K)\setminus \{\vect{l}\},
\end{equation*}
and the optimality of $\vect{x}^*$ implies $\la \vect{c},\vect{y}\ra =0$. But then $\la \vect{c},\vect{x}^*\ra = \la \vect{c}, \vect{l}\ra$ and we can choose $\vect{l}$ as maximizer instead.
\end{proof}

We can also look at the dual problem to get some insight into the problem structure.

\begin{lemma}\label{lem:dual}
Let $\vect{c}=\vect{c}_+ + \vect{c}_-$ be the Moreau decomposition where 
\begin{equation*}
\vect{c}_+ = \pi_{\K}(\vect{c})\quad \text{ and }\quad \vect{c}_- = \pi_{-\K^*}(\vect{c})=-\pi_{\K}(-\vect{c}),
\end{equation*}
since $\K$ is self-dual. Then the dual of \eqref{eq:assumption} is equivalent to
\begin{equation}
\min\left\{ \la \vect{l}-\vect{u}, \vect{y}\ra  \I \vect{y}\eqk -\vect{c}_+,\;  \vect{y}\eqk \vect{c}_-\right\}.\label{eq:simple_dual}
\end{equation}
\end{lemma}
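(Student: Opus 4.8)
The plan is to write down the Lagrangian (conic) dual of \eqref{eq:assumption} explicitly and then identify it with \eqref{eq:simple_dual} by an affine change of variables built from the Moreau decomposition. Since the claim only asserts that the two \emph{optimisation problems} coincide, no constraint qualification is needed: it suffices to manipulate the dual of \eqref{eq:assumption} formally.

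First I would attach multipliers $\vect{p}$ and $\vect{q}$ to the two constraints $\vect{x}-\vect{l}\in\K$ and $\vect{u}-\vect{x}\in\K$ of \eqref{eq:assumption}; since $\K$ is self-dual, dual feasibility reads $\vect{p},\vect{q}\in\K^*=\K$. The Lagrangian
\begin{equation*}
\la\vect{c},\vect{x}\ra+\la\vect{p},\vect{x}-\vect{l}\ra+\la\vect{q},\vect{u}-\vect{x}\ra=\la\vect{c}+\vect{p}-\vect{q},\vect{x}\ra+\la\vect{q},\vect{u}\ra-\la\vect{p},\vect{l}\ra
\end{equation*}
has finite supremum over the free variable $\vect{x}$ exactly when $\vect{c}+\vect{p}-\vect{q}=\vect{0}$. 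Eliminating $\vect{q}=\vect{c}+\vect{p}$ collapses the dual to a minimisation in the single cone variable $\vect{p}$, namely
\begin{equation*}
\min\left\{\la\vect{p},\vect{u}-\vect{l}\ra+\la\vect{c},\vect{u}\ra \I \vect{p}\eqk\vect{0},\ \vect{p}\eqk-\vect{c}\right\}.
\end{equation*}

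Next I would bring in the Moreau decomposition $\vect{c}=\vect{c}_++\vect{c}_-$ with $\vect{c}_+=\pi_{\K}(\vect{c})\in\K$, $\vect{c}_-=\vect{c}-\vect{c}_+\in-\K$ and $\la\vect{c}_+,\vect{c}_-\ra=0$, and substitute $\vect{y}:=\vect{p}+\vect{c}_-$, i.e.\ $\vect{p}=\vect{y}-\vect{c}_-$. This is an invertible affine map, hence a bijection of feasible sets that preserves optimal values. Using $\vect{c}-\vect{c}_-=\vect{c}_+$, the constraint $\vect{p}\eqk\vect{0}$ turns into $\vect{y}\eqk\vect{c}_-$ and $\vect{p}\eqk-\vect{c}$ turns into $\vect{y}\eqk-\vect{c}_+$ --- exactly the constraints of \eqref{eq:simple_dual}; and the objective becomes $\la\vect{u}-\vect{l},\vect{y}\ra$ together with the fixed constant $\la\vect{c}_+,\vect{u}\ra+\la\vect{c}_-,\vect{l}\ra$ (which, by $\la\vect{c}_+,\vect{c}_-\ra=0$, is the value $\la\vect{c},\cdot\ra$ would take at the ``box corner'' pairing $\vect{c}_+$ with $\vect{u}$ and $\vect{c}_-$ with $\vect{l}$). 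As an additive constant in the objective changes neither the feasible set nor the minimisers, this is \eqref{eq:simple_dual}.

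The two substitutions are otherwise elementary linear algebra; the one place that demands genuine care is the sign/orientation bookkeeping --- keeping straight, through the elimination of $\vect{q}$ and through $\vect{y}=\vect{p}+\vect{c}_-$, which translate of $\K$ each relation $\eqk$ refers to, and carrying the additive constant $\la\vect{c}_+,\vect{u}\ra+\la\vect{c}_-,\vect{l}\ra$ so that the reduced objective comes out as the linear form in \eqref{eq:simple_dual}. (If one additionally wants this common value to equal $\gamma_0$, i.e.\ that there is no duality gap, this follows from the usual constraint qualification, e.g.\ when $\vect{u}-\vect{l}\in\intr\K$; but that is beyond what the lemma claims.)
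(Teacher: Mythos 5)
Your derivation and the paper's proof follow the same route: compute the Lagrangian (conic) dual of \eqref{eq:assumption}, then absorb the equality constraint via an affine substitution built from the Moreau decomposition $\vect{c}=\vect{c}_++\vect{c}_-$. The only cosmetic difference is that you first eliminate one multiplier ($\vect{q}=\vect{c}+\vect{p}$) and then shift $\vect{p}=\vect{y}-\vect{c}_-$, whereas the paper parametrises both multipliers at once by $\vect{y}_1=\vect{y}-\vect{c}_-$, $\vect{y}_2=\vect{y}+\vect{c}_+$; these are the same change of variables.

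However, your careful sign bookkeeping actually exposes a sign error in the paper. Starting from the Lagrangian $\la\vect{c},\vect{x}\ra+\la\vect{y}_1,\vect{x}-\vect{l}\ra+\la\vect{y}_2,\vect{u}-\vect{x}\ra$ with $\vect{y}_1,\vect{y}_2\in\K$, the supremum over $\vect{x}$ is finite iff $\vect{y}_2-\vect{y}_1=\vect{c}$ and then equals $\la\vect{u},\vect{y}_2\ra-\la\vect{l},\vect{y}_1\ra$; the paper's intermediate dual \eqref{eq:dual} instead writes $\la\vect{l},\vect{y}_2\ra-\la\vect{u},\vect{y}_1\ra$, i.e.\ it has $\vect{l}$ and $\vect{u}$ swapped. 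After the substitution this propagates to \eqref{eq:simple_dual}, whose objective should read $\la\vect{u}-\vect{l},\vect{y}\ra$ (as you obtained), not $\la\vect{l}-\vect{u},\vect{y}\ra$. One can confirm this on the trivial example $\K=\R_+$, $n=1$, $\vect{l}=0$, $\vect{u}=1$, $\vect{c}=1$: here $\vect{c}_+=1$, $\vect{c}_-=0$, the feasible set for $\vect{y}$ is $\{y\geq 0\}$, and $\min\{(l-u)y\}=\min\{-y\}=-\infty$ is clearly wrong, while $\min\{(u-l)y\}=\min\{y\}=0$ is the correct (shifted) dual value. So your proof is correct and, modulo this corrected sign, establishes the lemma; the paper's statement and Remark~\ref{cor:join} (which uses \eqref{eq:simple_dual}) should both carry the objective $\la\vect{u}-\vect{l},\vect{y}\ra$.
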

\begin{proof}
The dual problem reads
\begin{equation}
\min \left\{ \la \vect{l},\vect{y}_2\ra - \la \vect{u}, \vect{y}_1\ra   \I \vect{y}_2-\vect{y}_1=\vect{c},\; \vect{y}_1,\vect{y}_2\in \K\right\},\label{eq:dual}
\end{equation}
since $\K$ is self-dual. We can reparametrize $\vect{y}_1=\vect{y}-\vect{c}_-$ and $\vect{y}_2=\vect{y}+\vect{c}_+$ to satisfy the equality constraint and get
\begin{equation*}
\min\left\{ \la \vect{l}-\vect{u}, \vect{y}\ra +\la \vect{l},\vect{c}_+\ra +\la \vect{u},\vect{c}_-\ra  \I \vect{y}\eqk -\vect{c}_+,\, \vect{y}\eqk \vect{c}_-\right\}.
\end{equation*}
Since $\la \vect{l},\vect{c}_+\ra +\la \vect{u},\vect{c}_-\ra$ is constant, the result follows.
\end{proof}

The importance of Lemma~\ref{lem:dual} comes from the following observation.

\begin{remark}\label{cor:join}
The optimal solution of the dual \eqref{eq:simple_dual} is necessarily a least upperbound on the set $\{\vect{c}_-, -\vect{c}_+\}$ in the partial ordered set $(\R^n,\keq)$. Thus, if $(\R^n,\keq)$ is a lattice in the sense of order theory, then the solution of the dual can be recovered from the join $(\vect{c}_-\vee -\vect{c}_+)$ in $(\R^n,\keq)$.
\end{remark}

\subsection{The nonnegative orthant \texorpdfstring{$\K=\R^n_+$}{LP}}

In the case of nonnegative vectors, all our results fall back to the original paper~\cite{Fujishige08}. Compared to the general setting outlined in the previous chapters, it can be shown that the minimum-norm-point algorithm converges to the optimal solution in a finite number of iterations \cite{Wolfe76} and that the LP-Newton method converges in a finite number of steps as well \cite{Fujishige08}, making the proposed framework an overall finite algorithm for the case of $\K=\R^n_+$.

\noindent Of course, $\R^n_+$ is suitable for the CLP-Newton method, as a solution $\vect{x}^*$ is given by greedily choosing the largest increase of the objective function by setting
\begin{equation}
x^*_i = \begin{cases} l_i & \text{if }c_i <0,\\ u_i & \text{else.} \end{cases}\label{eq:greedyLP}
\end{equation}

\noindent In particular, since $\leq $ has the lattice property, Remark~\ref{cor:join} shows $\vect{y}=\vect{0}$ in \eqref{eq:simple_dual}, as
\begin{equation*}
y_i = (\vect{c}_- \vee -\vect{c}_+)_i = \max \{ 0, -|c_i|\} = 0.
\end{equation*}
This confirms \eqref{eq:greedyLP} through the dual variables $\vect{y}_1 = -\pi_{\R^n_-}(\vect{c})$ and $\vect{y}_2 = \pi_{\R^n_+}(\vect{c})$ in \eqref{eq:dual}.

\subsection{The Lorentz-cone \texorpdfstring{$\K=\mc{L}_n$}{SOCP}}

We will denote by 
\begin{equation*}
\mc{L}_n= \left\{(x_0,\tilde{\vect{x}})\in \R_+\times \R^n \I \|\tilde{\vect{x}}\|_2\leq x_0\right\}
\end{equation*}
the Lorentz-Cone. This cone is nice in the sense that we explicitly have
\begin{equation*}
\bd(\mc{L}_n) =\left\{ (x_0,\tilde{\vect{x}})\in \R_+\times \R^n \I \|\tilde{\vect{x}}\|_2= x_0\right\},
\end{equation*}
so we can apply Lemma~\ref{lem:boundary}. 

For the rest of this section, for any $\vect{a}=(a_0,a_1,\ldots,a_n)\in \R^{n+1}$, we will use the notation $\vect{a}^\T=(a_0,\tilde{\vect{a}}^\T)$ with $\tilde{\vect{a}}=(a_1,\ldots,a_n)\in \R^n$ and $a_0\in \R$. Furthermore, for any $\vect{w}\in \mc{L}_n$, we will also define the set 
\begin{equation}
\mc{E}(\vect{w})= \left\{ \vect{x}\in \R^{n+1} \I \|\tilde{\vect{x}}\|_2^2=x_0^2,\quad \|\tilde{\vect{w}}-\tilde{\vect{x}}\|_2^2=(w_0-x_0)^2\right\}. \label{eq:ellipsoid_alt}
\end{equation}

\begin{lemma}\label{lem:ellipsoid_representation}
For any $\vect{w}=(w_0, \tilde{\vect{w}})\in \intr(\mc{L}_n)$, define the parameters
\begin{equation*}
\bar{\vect{w}}:=\frac{1}{w_0}\tilde{\vect{w}},\quad 
\bar{w}_0 := \frac{w_0^2-\|\tilde{\vect{w}}\|^2_2}{2w_0}, 
\quad \matr{Q}:=\matr{I}_n-\bar{\vect{w}}\bar{\vect{w}}^\T\quad
\text{and}\quad  \gamma:=\sqrt{\tfrac{1}{2}w_0 \bar{w}_0}.
\end{equation*}
Then 
\begin{equation*}
\mc{E}(\vect{w})= \left\{ \vect{x}\in \R^{n+1} \,\middle|\, x_0 = \la \tilde{\vect{x}}, \bar{\vect{w}}\ra + \bar{w}_0  ,\; \left\| \matr{Q}^{\frac{1}{2}} (\tilde{\vect{x}}-\tfrac{1}{2}\tilde{\vect{w}})\right\|_2^2=\gamma^2 \right\}
\end{equation*}
and in particular, $\mc{E}(\vect{w})$ is an n-dimensional ellipsoid.
\end{lemma}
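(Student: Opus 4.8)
The plan is to transform the two quadratic equations defining $\mc{E}(\vect{w})$ into the asserted ``affine hyperplane $\cap$ ellipsoid'' form, peeling off the affine constraint first and then completing the square on the residual quadric. Before starting I would record the facts guaranteed by $\vect{w}\in\intr(\mc{L}_n)$, namely $w_0>0$ and $\|\bar{\vect{w}}\|_2=\|\tilde{\vect{w}}\|_2/w_0<1$; consequently $\matr{Q}=\matr{I}_n-\bar{\vect{w}}\bar{\vect{w}}^\T$ has eigenvalues $1$ and $1-\|\bar{\vect{w}}\|_2^2>0$, hence is symmetric positive definite with a well-defined symmetric square root $\matr{Q}^{1/2}$, and $\bar{w}_0>0$, $\gamma^2=\tfrac12 w_0\bar{w}_0>0$.

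The first step is to eliminate the quadratic terms. For $\vect{x}$ satisfying $\|\tilde{\vect{x}}\|_2^2=x_0^2$, expanding $\|\tilde{\vect{w}}-\tilde{\vect{x}}\|_2^2=(w_0-x_0)^2$ and cancelling $\|\tilde{\vect{x}}\|_2^2$ against $x_0^2$ leaves a linear equation that rearranges to $x_0=\la\tilde{\vect{x}},\bar{\vect{w}}\ra+\bar{w}_0$; conversely, on $\{\|\tilde{\vect{x}}\|_2^2=x_0^2\}$ this linear relation is equivalent to the second defining equation. Hence $\mc{E}(\vect{w})=\{\vect{x}\mid \|\tilde{\vect{x}}\|_2^2=x_0^2,\ x_0=\la\tilde{\vect{x}},\bar{\vect{w}}\ra+\bar{w}_0\}$, and it remains to rewrite the system consisting of this linear constraint together with $\|\tilde{\vect{x}}\|_2^2=x_0^2$.

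Substituting the linear relation into $\|\tilde{\vect{x}}\|_2^2=x_0^2$ and using $\la\tilde{\vect{x}},\bar{\vect{w}}\ra^2=\tilde{\vect{x}}^\T\bar{\vect{w}}\bar{\vect{w}}^\T\tilde{\vect{x}}$ yields $\tilde{\vect{x}}^\T\matr{Q}\tilde{\vect{x}}-2\bar{w}_0\bar{\vect{w}}^\T\tilde{\vect{x}}=\bar{w}_0^2$, which I would complete to a square about the center $\vect{v}:=\bar{w}_0\matr{Q}^{-1}\bar{\vect{w}}$ (the unique $\vect{v}$ with $\matr{Q}\vect{v}=\bar{w}_0\bar{\vect{w}}$). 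The only non-mechanical point is that $\vect{v}$ collapses to $\tfrac12\tilde{\vect{w}}$: the Sherman--Morrison formula gives $\matr{Q}^{-1}=\matr{I}_n+(1-\|\bar{\vect{w}}\|_2^2)^{-1}\bar{\vect{w}}\bar{\vect{w}}^\T$, so $\matr{Q}^{-1}\bar{\vect{w}}=(1-\|\bar{\vect{w}}\|_2^2)^{-1}\bar{\vect{w}}$, and $\bar{w}_0(1-\|\bar{\vect{w}}\|_2^2)^{-1}=w_0/2$ by the definitions of $\bar{w}_0,\bar{\vect{w}}$, whence $\vect{v}=(w_0/2)\bar{\vect{w}}=\tfrac12\tilde{\vect{w}}$. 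The equation then reads $(\tilde{\vect{x}}-\tfrac12\tilde{\vect{w}})^\T\matr{Q}(\tilde{\vect{x}}-\tfrac12\tilde{\vect{w}})=\bar{w}_0^2+\vect{v}^\T\matr{Q}\vect{v}$, and a second short computation, $\vect{v}^\T\matr{Q}\vect{v}=\bar{w}_0\la\vect{v},\bar{\vect{w}}\ra=\bar{w}_0\|\tilde{\vect{w}}\|_2^2/(2w_0)$ followed by adding $\bar{w}_0^2$, shows the right-hand side equals $\tfrac12 w_0\bar{w}_0=\gamma^2$. Since $\matr{Q}=\matr{Q}^{1/2}\matr{Q}^{1/2}$ with $\matr{Q}^{1/2}$ symmetric, this is precisely $\|\matr{Q}^{1/2}(\tilde{\vect{x}}-\tfrac12\tilde{\vect{w}})\|_2^2=\gamma^2$, completing the set identity.

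For the last assertion I would note that the linear constraint exhibits $\tilde{\vect{x}}\mapsto(\la\tilde{\vect{x}},\bar{\vect{w}}\ra+\bar{w}_0,\tilde{\vect{x}})$ as an affine bijection from $\{\tilde{\vect{x}}\in\R^n\mid\|\matr{Q}^{1/2}(\tilde{\vect{x}}-\tfrac12\tilde{\vect{w}})\|_2^2=\gamma^2\}$ onto $\mc{E}(\vect{w})$, and that the former set is the image of the Euclidean sphere of radius $\gamma>0$ under the nonsingular affine map $\vect{u}\mapsto\matr{Q}^{-1/2}\vect{u}+\tfrac12\tilde{\vect{w}}$; hence $\mc{E}(\vect{w})$ is a nondegenerate ellipsoid inside the $n$-dimensional affine subspace $\{x_0=\la\tilde{\vect{x}},\bar{\vect{w}}\ra+\bar{w}_0\}$. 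The main obstacle is purely computational — carrying the two ``collapses'' ($\vect{v}=\tfrac12\tilde{\vect{w}}$ and right-hand side $=\gamma^2$) through without slips; there is no conceptual difficulty, and the positive definiteness of $\matr{Q}$ coming from $\vect{w}\in\intr(\mc{L}_n)$ is what keeps everything nondegenerate.
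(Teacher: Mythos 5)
Your proof is correct and follows essentially the same route as the paper: subtract the two quadratic equations to expose the affine constraint, substitute back, complete the square, and use Sherman--Morrison to collapse the center to $\tfrac12\tilde{\vect{w}}$ and the right-hand side to $\gamma^2$. You are slightly more careful than the paper in stating the forward-and-back equivalence of the linear relation and in justifying the ``$n$-dimensional ellipsoid'' conclusion via the affine bijection, but there is no substantive difference in approach.
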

\begin{proof}
Subtracting the equations in \eqref{eq:ellipsoid_alt} immediately shows containment in the hyperplane
\begin{equation}
H=\left\{\vect{x}\in \R^{n+1} \,\middle|\, x_0 = \la \tilde{\vect{x}}, \bar{\vect{w}}\ra + \bar{w}_0 \right\},\label{eq:hyperplane}
\end{equation}
where $\bar{\vect{w}}$ and $\bar{w}_0$ are well defined since $\vect{w}\in \intr(\mc{L}_n)$. 

Using \eqref{eq:hyperplane} in either equation in \eqref{eq:ellipsoid_alt} on $x_0$ yields an equation of the form
\begin{equation*}
0=\tilde{\vect{x}}^\T \matr{Q} \tilde{\vect{x}} - 2\la\bar{w}_0 \bar{\vect{w}},\tilde{\vect{x}}\ra -\bar{w}_0^2 
\end{equation*}
where $\matr{Q}$ is positive definite since $\|\bar{\vect{w}}\|_2 < 1$. Completing the square yields the equivalent condition
\begin{equation*}
\| \matr{Q}^{\frac{1}{2}}(\tilde{\vect{x}} - \bar{w}_0 \matr{Q}^{-1} \bar{\vect{w}} )\|_2^2 = 
\bar{w}_0^2 +  \bar{w}_0^2\cdot \bar{\vect{w}}^\T \matr{Q}^{-1} \bar{\vect{w}},
\end{equation*}
which defines an $n$-dimensional ellipsoid.

Using the Sherman-Morrison formula we can simplify 
\begin{equation*}
\bar{w}_0 \matr{Q}^{-1}\bar{\vect{w}} = \tfrac{1}{2}\tilde{\vect{w}}
\end{equation*}
and
\begin{equation*}
\bar{w}_0^2+ \bar{w}_0^2\cdot \bar{\vect{w}}^\T \matr{Q}^{-1} \bar{\vect{w}} 
 =\tfrac{1}{2}w_0 \bar{w}_0=\gamma^2.
\end{equation*}
\end{proof}

\begin{lemma}\label{lem:ellipsoid_optimization}
For any $w\in \mc{L}_n$, the problem
\begin{equation}
\max\left\{ \la \vect{c},\vect{x}\ra \,\middle|\, \vect{x}\in \mc{E}(\vect{w})\right\}\label{eq:ellipsoid_optimization}
\end{equation}
can be solved in $\mc{O}(n)$.
\end{lemma}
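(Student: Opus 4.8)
The plan is to exploit the explicit description of $\mc{E}(\vect{w})$ provided by Lemma~\ref{lem:ellipsoid_representation}. For $\vect{w}\in\intr(\mc{L}_n)$ that lemma says $\mc{E}(\vect{w})$ is the set of $\vect{x}=(x_0,\tilde{\vect{x}})$ satisfying the single affine equation $x_0=\la\tilde{\vect{x}},\bar{\vect{w}}\ra+\bar{w}_0$ together with the ellipsoid equation $\|\matr{Q}^{1/2}(\tilde{\vect{x}}-\tfrac12\tilde{\vect{w}})\|_2^2=\gamma^2$ in the $\tilde{\vect{x}}$-variables. So first I would substitute the affine relation into the objective: $\la\vect{c},\vect{x}\ra=c_0 x_0+\la\tilde{\vect{c}},\tilde{\vect{x}}\ra=c_0(\la\tilde{\vect{x}},\bar{\vect{w}}\ra+\bar{w}_0)+\la\tilde{\vect{c}},\tilde{\vect{x}}\ra=\la\tilde{\vect{c}}+c_0\bar{\vect{w}},\tilde{\vect{x}}\ra+c_0\bar{w}_0$. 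This reduces \eqref{eq:ellipsoid_optimization} to maximizing a linear functional $\la\vect{d},\tilde{\vect{x}}\ra$ (plus a constant), where $\vect{d}:=\tilde{\vect{c}}+c_0\bar{\vect{w}}$, over the ellipsoid $\{\tilde{\vect{x}}:\|\matr{Q}^{1/2}(\tilde{\vect{x}}-\tfrac12\tilde{\vect{w}})\|_2^2=\gamma^2\}$.

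Next I would solve this ellipsoid-constrained linear maximization in closed form. Writing $\tilde{\vect{x}}=\tfrac12\tilde{\vect{w}}+\gamma\matr{Q}^{-1/2}\vect{v}$ with $\|\vect{v}\|_2=1$ turns it into maximizing $\gamma\la\matr{Q}^{-1/2}\vect{d},\vect{v}\ra$ over the unit sphere, whose maximizer is $\vect{v}^*=\matr{Q}^{-1/2}\vect{d}/\|\matr{Q}^{-1/2}\vect{d}\|_2$, giving optimal value $\gamma\|\matr{Q}^{-1/2}\vect{d}\|_2=\gamma\sqrt{\la\vect{d},\matr{Q}^{-1}\vect{d}\ra}$ and optimal point $\tilde{\vect{x}}^*=\tfrac12\tilde{\vect{w}}+\gamma\,\matr{Q}^{-1}\vect{d}/\sqrt{\la\vect{d},\matr{Q}^{-1}\vect{d}\ra}$, with $x_0^*$ recovered from the affine relation. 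The only quantities to compute are $\matr{Q}^{-1}\vect{d}$ and the scalar $\la\vect{d},\matr{Q}^{-1}\vect{d}\ra$. Since $\matr{Q}=\matr{I}_n-\bar{\vect{w}}\bar{\vect{w}}^\T$ is a rank-one perturbation of the identity, the Sherman--Morrison formula gives $\matr{Q}^{-1}=\matr{I}_n+\tfrac{\bar{\vect{w}}\bar{\vect{w}}^\T}{1-\|\bar{\vect{w}}\|_2^2}$ explicitly, so $\matr{Q}^{-1}\vect{d}=\vect{d}+\tfrac{\la\bar{\vect{w}},\vect{d}\ra}{1-\|\bar{\vect{w}}\|_2^2}\bar{\vect{w}}$ costs $\mc{O}(n)$ arithmetic operations (two inner products and one axpy), and everything else is scalar work. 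Hence the whole procedure is $\mc{O}(n)$.

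Finally I would address the degenerate case $\vect{w}\in\bd(\mc{L}_n)$, which Lemma~\ref{lem:ellipsoid_representation} does not cover since $\bar{\vect{w}},\bar{w}_0$ blow up or degenerate there. When $\vect{w}\in\bd(\mc{L}_n)$ one has $w_0=\|\tilde{\vect{w}}\|_2$, and a direct inspection of the defining equations \eqref{eq:ellipsoid_alt} shows $\mc{E}(\vect{w})$ degenerates: from $\|\tilde{\vect{w}}-\tilde{\vect{x}}\|_2^2=(w_0-x_0)^2$ and $\|\tilde{\vect{x}}\|_2^2=x_0^2$ one derives $x_0=\la\tilde{\vect{x}},\tilde{\vect{w}}\ra/w_0$ (the limiting hyperplane $H$), and substituting back forces $\|\matr{Q}^{1/2}(\tilde{\vect{x}}-\tfrac12\tilde{\vect{w}})\|_2=0$, i.e. $\tilde{\vect{x}}$ lies on the ray $\R_{\ge0}\tilde{\vect{w}}$; one checks $\mc{E}(\vect{w})=\{t\vect{w}:t\ge 0\}$ (or $\{\vect{0}\}$ together with the ray), an unbounded set, so \eqref{eq:ellipsoid_optimization} is either trivially $0$ at $\vect{x}=\vect{0}$, $+\infty$, or the linear problem $\max\{t\la\vect{c},\vect{w}\ra:t\ge0\}$, all solvable in $\mc{O}(n)$. (Alternatively, and more cleanly, one can note that in the intended application $\vect{w}=\pi_{\bar Z}(\cdots)$ or the relevant $\vect{w}$ always lies in $\intr(\mc{L}_n)$, so the boundary case may be excluded by hypothesis.) I expect the main obstacle to be handling this boundary/degenerate case carefully and deciding how much of it to include; the interior case is a routine closed-form computation once Lemma~\ref{lem:ellipsoid_representation} is invoked.
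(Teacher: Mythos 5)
Your proposal is correct and follows essentially the same route as the paper: split on $\vect{w}\in\bd(\mc{L}_n)$ versus $\vect{w}\in\intr(\mc{L}_n)$, invoke Lemma~\ref{lem:ellipsoid_representation} in the interior case to reduce to linear optimization over a sphere with a closed-form maximizer, and exploit the rank-one structure of $\matr{Q}$ via Sherman--Morrison to keep everything in $\mc{O}(n)$. Two small deviations worth noting: the paper additionally treats the degenerate subcase $\matr{Q}^{-1/2}(\tilde{\vect{c}}+\tfrac{c_0}{w_0}\tilde{\vect{w}})=\vect{0}$ (your formula for $\tilde{\vect{x}}^*$ becomes $0/0$ there, though any point of the ellipsoid is then optimal), and where you work directly with $\matr{Q}^{-1}$, the paper instead derives explicit rank-one expressions for $\matr{Q}^{\pm 1/2}$ in \eqref{eq:explicit_Q} --- a slightly more roundabout but equivalent $\mc{O}(n)$ route.
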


\begin{proof}
We will distinguish the cases $\vect{w}\in \bd(\mc{L}_n)$ and $\vect{w}\in \intr(\mc{L}_n)$, which can be checked in $\mc{O}(n)$.

For $\vect{w}\in \bd(\mc{L}_n)$, we claim that $\mc{E}(\vect{w})=\conv(\{\vect{0},\vect{w}\})$. This is easy to see since by assumption, $\|\tilde{\vect{w}}\|_2 = w_0$, $\|\tilde{\vect{x}}\|_2=x_0$ and therefore 
\begin{equation*}
\|\tilde{\vect{w}}-\tilde{\vect{x}}\|_2=\|\tilde{\vect{w}}\|_2-\|\tilde{\vect{x}}\|_2 \quad \forall \vect{x}\in \mc{E}(\vect{w}),
\end{equation*}
which is only possible if $\tilde{\vect{x}}$ is a multiple of $\tilde{\vect{w}}$. This also fixes $x_0$ to be the same multiple of $w_0$ and consequently, \eqref{eq:ellipsoid_optimization} will either be $0$ or $\la \vect{c},\vect{w}\ra$. 

For $\vect{w}\in \intr(\mc{L}_n)$, Lemma~\ref{lem:ellipsoid_representation} allows us to use the parametrization
\begin{equation}
\tilde{\vect{x}} = \matr{Q}^{-\frac{1}{2}}\vect{y} +\tfrac{1}{2}\tilde{\vect{w}},\quad s.t.\quad \|\vect{y}\|_2^2=\gamma^2 \label{eq:parametrization}
\end{equation}
of \eqref{eq:ellipsoid_optimization} in terms of $\vect{y}$ with corresponding objective
\begin{align*}
\la \vect{c},\vect{x}\ra
&= \la \tilde{\vect{c}}+\tfrac{c_0}{w_0} \tilde{\vect{w}},\tilde{\vect{x}}\ra + c_0\bar{w}_0 \\
&\equiv \la \tilde{\vect{c}}+\tfrac{c_0}{w_0} \tilde{\vect{w}}, \matr{Q}^{-\frac{1}{2}}\vect{y}+\tfrac{1}{2}\tilde{\vect{w}}\ra
\equiv \la \matr{Q}^{-\frac{1}{2}}(\tilde{\vect{c}}+\tfrac{c_0}{w_0} \tilde{\vect{w}}), \vect{y}\ra,
\end{align*}
where $\equiv$ denotes equality up to a constant difference. We need to distinguish two cases:

If $\matr{Q}^{-\frac{1}{2}}(\tilde{\vect{c}}+\frac{c_0}{w_0}\tilde{\vect{w}})=\vect{0}$, then the optimal solution $\vect{y}^*$ can be chosen arbitrarily and we set $\vect{y}^*=\frac{\gamma}{n}\vect{e}$, where $\vect{e}$ is the vector of all ones. 

Otherwise, since $\vect{y}$ is chosen from a scaled Euclidean ball, the optimal parametrization $\vect{y}^*$ is parallel to the new objective and we get the closed form expression
\begin{equation*}
\vect{y}^*= \gamma \cdot \frac{\matr{Q}^{-\frac{1}{2}}(\tilde{\vect{c}}+\tfrac{c_0}{w_0} \tilde{\vect{w}})}{\|\matr{Q}^{-\frac{1}{2}}(\tilde{\vect{c}}+\tfrac{c_0}{w_0} \tilde{\vect{w}})\|_2}.
\end{equation*}
In either case, we can use the parametrization \eqref{eq:parametrization} to recover the optimal solution 
\begin{equation}
\tilde{\vect{x}}^* = \matr{Q}^{-\frac{1}{2}} \vect{y}^* + \tfrac{1}{2}\tilde{\vect{w}},\quad 
x_0^* = \la \tilde{\vect{x}}^*, \bar{\vect{w}}\ra + \bar{w}_0.\label{eq:optimal_y}
\end{equation}

In order to show that this expression can be evaluated in linear time, we only need to show that multiplication by $\matr{Q}^{-\frac{1}{2}}$ can be done in $\mc{O}(n)$. Therefore, we will proceed by giving an explicit formula for $\matr{Q}^{-\frac{1}{2}}$. 

If $\tilde{\vect{w}}\neq \vect{0}$, we set
\begin{equation*}
\alpha :=  \frac{1-\tfrac{2\gamma}{w_0}}{\|\tilde{\vect{w}}\|_2^2},\quad \beta := \frac{\alpha}{1-\alpha \|\tilde{\vect{w}}\|_2^2} = \frac{\tfrac{w_0}{2\gamma}-1}{\|\tilde{\vect{w}}\|_2^2},
\end{equation*}
and $\alpha=\beta=0$ otherwise. Using $w_0>\|\tilde{\vect{w}}\|_2$, one can verify that $\alpha,\beta\geq 0$ and a
straightforward computation shows the identities
\begin{equation}
\matr{Q}^{\frac{1}{2}}=\matr{I}_n-\alpha\tilde{\vect{w}}\tilde{\vect{w}}^\T,\quad \matr{Q}^{-\frac{1}{2}}=\matr{I}_n+\beta\tilde{\vect{w}}\tilde{\vect{w}}^\T,\label{eq:explicit_Q}
\end{equation}
where one identity can be reduced to the other by the Sherman-Morrison formula.
Finally, \eqref{eq:explicit_Q}~shows
\begin{equation*}
\matr{Q}^{-\frac{1}{2}}\tilde{\vect{x}} = \tilde{\vect{x}} + \beta \la \tilde{\vect{x}}, \tilde{\vect{w}}\ra \tilde{\vect{w}}\quad \forall \tilde{\vect{x}}\in\R^n
\end{equation*}
where the right side can be computed in $\mc{O}(n)$.
\end{proof}

\begin{theorem}\label{thm:maxLn}
The cone $\mc{L}_n$ is suitable for the CLP-Newton method.
\end{theorem}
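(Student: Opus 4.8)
The goal is to show that optimizing a linear functional $\vect{c}$ over a Lorentz-box $[\vect{l},\vect{u}]_{\mc{L}_n}\subseteq\R^{n+1}$ can be done in $\mc{O}(n^2)$. The plan is to combine Lemma~\ref{lem:boundary} with Lemma~\ref{lem:ellipsoid_optimization}. By Lemma~\ref{lem:boundary}, it suffices to maximize $\la\vect{c},\vect{x}\ra$ over the finite set $\{\vect{l},\vect{u}\}$ together with the ``edge'' set $\bd(\vect{l}+\mc{L}_n)\cap\bd(\vect{u}-\mc{L}_n)$. Evaluating the two corner points $\vect{l}$ and $\vect{u}$ is trivially $\mc{O}(n)$, so the whole difficulty is optimizing over the curved piece of the boundary.

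\textbf{Reducing the curved boundary to an ellipsoid.} The first step is to identify $\bd(\vect{l}+\mc{L}_n)\cap\bd(\vect{u}-\mc{L}_n)$ with a translate/reflection of the set $\mc{E}(\vect{w})$ from \eqref{eq:ellipsoid_alt} for a suitable $\vect{w}$. Writing $\vect{x}=\vect{l}+\vect{x}'$, membership in $\bd(\vect{l}+\mc{L}_n)$ means $\|\tilde{\vect{x}}'\|_2^2=(x_0')^2$, and membership in $\bd(\vect{u}-\mc{L}_n)$ means $\|\widetilde{\vect{u}-\vect{l}}-\tilde{\vect{x}}'\|_2^2=((u_0-l_0)-x_0')^2$; comparing with \eqref{eq:ellipsoid_alt} this is precisely $\vect{x}'\in\mc{E}(\vect{w})$ for $\vect{w}=\vect{u}-\vect{l}$, which lies in $\intr(\mc{L}_n)$ (resp. $\bd(\mc{L}_n)$) because we assumed $\vect{l}<_{\mc{L}_n}\vect{u}$. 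Hence maximizing $\la\vect{c},\vect{x}\ra$ over the curved boundary equals $\la\vect{c},\vect{l}\ra+\max\{\la\vect{c},\vect{x}'\ra\mid \vect{x}'\in\mc{E}(\vect{u}-\vect{l})\}$. Lemma~\ref{lem:ellipsoid_optimization} solves the latter in $\mc{O}(n)$, and in the degenerate case $\vect{w}\in\bd(\mc{L}_n)$ the same lemma shows $\mc{E}(\vect{w})=\conv\{\vect{0},\vect{w}\}$ so the edge contributes nothing new beyond the corners.

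\textbf{Assembling the algorithm.} Putting the pieces together: compute $\vect{w}=\vect{u}-\vect{l}$, check in $\mc{O}(n)$ whether $\vect{w}\in\intr(\mc{L}_n)$ or $\vect{w}\in\bd(\mc{L}_n)$, run the closed-form procedure of Lemma~\ref{lem:ellipsoid_optimization} to get the optimal $\vect{x}'^*\in\mc{E}(\vect{w})$ in $\mc{O}(n)$, shift back by $\vect{l}$, and finally compare the objective value at that point with $\la\vect{c},\vect{l}\ra$ and $\la\vect{c},\vect{u}\ra$, returning the best of the three. Every operation — forming $\vect{w}$, the membership test, the rank-one multiplications by $\matr{Q}^{\pm1/2}$ via \eqref{eq:explicit_Q}, and the three inner products — costs $\mc{O}(n)$, so the total is $\mc{O}(n)$, well within the $\mc{O}(n^2)$ budget; in particular $\mc{L}_n$ is suitable for the CLP-Newton method.

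\textbf{Main obstacle.} The only genuinely delicate point is the first step: verifying that $\bd(\vect{l}+\mc{L}_n)\cap\bd(\vect{u}-\mc{L}_n)$ really is (a shift of) $\mc{E}(\vect{u}-\vect{l})$ and not some proper subset or superset of it — i.e. that the two quadratic boundary conditions, after the affine substitution, are exactly the defining equations \eqref{eq:ellipsoid_alt}, with no extra sign or nonnegativity constraint lost. One must check that the nonnegativity requirements $x_0'\ge0$ and $(u_0-l_0)-x_0'\ge0$ built into the two cone boundaries are automatically implied by the norm equalities together with $\vect{l}<_{\mc{L}_n}\vect{u}$, so that dropping them (as \eqref{eq:ellipsoid_alt} does) does not enlarge the feasible set. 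Once that identification is secured, the rest is bookkeeping and a finite comparison.
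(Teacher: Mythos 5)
Your approach coincides with the paper's: reduce via Lemma~\ref{lem:boundary} to $\{\vect{l},\vect{u}\}\cup\bigl(\bd(\vect{l}+\mc{L}_n)\cap\bd(\vect{u}-\mc{L}_n)\bigr)$, translate by $\vect{l}$ so that the curved piece becomes $\bd(\mc{L}_n)\cap\bd(\vect{w}-\mc{L}_n)$ for $\vect{w}=\vect{u}-\vect{l}$, identify that set with $\mc{E}(\vect{w})$, and invoke Lemma~\ref{lem:ellipsoid_optimization}. You also correctly single out the delicate step. The problem is that in the middle paragraph you assert the identification (``this is precisely $\vect{x}'\in\mc{E}(\vect{w})$'') and then, in the ``main obstacle'' paragraph, merely \emph{flag} that the sign constraints must be shown redundant, without actually showing it. As written, the proof has a gap at exactly the point you yourself identified.

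Concretely, $\vect{x}\in\bd(\mc{L}_n)$ means $\|\tilde{\vect{x}}\|_2 = x_0$ \emph{with} $x_0\ge 0$, which is strictly stronger than the squared identity $\|\tilde{\vect{x}}\|_2^2 = x_0^2$ used in \eqref{eq:ellipsoid_alt}; similarly for the other cone. So what follows immediately from your substitution is only
\begin{equation*}
\bd(\mc{L}_n)\cap\bd(\vect{w}-\mc{L}_n)\;=\;\bigl\{\vect{x}\in\mc{E}(\vect{w})\ \big|\ x_0\in[0,w_0]\bigr\},
\end{equation*}
a priori a proper subset of $\mc{E}(\vect{w})$, and optimizing over the larger set $\mc{E}(\vect{w})$ could in principle return an infeasible point. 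The paper closes this gap by turning Lemma~\ref{lem:ellipsoid_optimization} on itself: taking $\vect{c}^\T=(\pm 1,\vect{0}^\T)$ gives the explicit range of $x_0$ over $\mc{E}(\vect{w})$, namely $\bigl[\tfrac{1}{2}(w_0-\|\tilde{\vect{w}}\|_2),\ \tfrac{1}{2}(w_0+\|\tilde{\vect{w}}\|_2)\bigr]\subseteq[0,w_0]$, the inclusion holding because $\vect{w}\in\mc{L}_n$. Hence the sign constraints are automatic and the two sets coincide. Add that computation (or any equivalent argument bounding $x_0$ on $\mc{E}(\vect{w})$) and your proof is complete; the rest of the assembly and the $\mc{O}(n)$ cost accounting are fine.
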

\begin{proof}
Through translation we can assume that $\vect{l}=\vect{0}$ and focus on the case 
\begin{equation}
\max\left\{ \la \vect{c},\vect{x}\ra \I \vect{x}\in [\vect{0}, \vect{w}]_{\mc{L}_n}\subseteq \R^{n+1}\right\}\label{eq:maxLn}
\end{equation}
where $\vect{w}=\vect{u}-\vect{l}\in \mc{L}_n\setminus \{\vect{0}\}$ and consequently $w_0 = u_0-l_0>0$. By using Lemma~\ref{lem:boundary}, it suffices to compute 
\begin{equation}
\max\left\{ \la \vect{c},\vect{x}\ra \,\middle|\, \vect{x}\in \bd(\mc{L}_n)\cap \bd(\vect{w}-\mc{L}_n)=:\mc{E}'(\vect{w})\right\} \label{eq:lorentz_optimization}
\end{equation}
and compare this value to $\la \vect{c},\vect{0}\ra=0$ and $\la \vect{c},\vect{w}\ra$.  We thus have
\begin{align*}
\mc{E}'(\vect{w})&= \left\{ \vect{x}\in \R^{n+1} \,\middle|\, \|\tilde{\vect{x}}\|_2=x_0, \|\tilde{\vect{w}}-\tilde{\vect{x}}\|_2=w_0-x_0\right\}\\
&=\left\{\vect{x}\in \mc{E}(\vect{w}) \,\middle|\, x_0\in [0,w_0]\right\}
\end{align*}
by \eqref{eq:ellipsoid_alt} and claim that $\mc{E}'(\vect{w})=\mc{E}(\vect{w})$. 

To see this, we can use Lemma~\ref{lem:ellipsoid_optimization} with objective $\vect{c}^\T=(\pm 1, \vect{0}^\T)$ to get 
\begin{equation*}
\max \left\{ \pm x_0 \I \vect{x}\in \mc{E}(\vect{w})\right\} = \tfrac{1}{2}w_0 \pm \tfrac{1}{2} \|\tilde{\vect{w}}\|_2\in [0,w_0],
\end{equation*}
where the bounds follow from $\vect{w}\in \mc{L}_n$.
All that is left now is to compute 
\begin{equation*}
\max\left\{ \la \vect{c},\vect{x}\ra \,\middle|\, \vect{x}\in \mc{E}(\vect{w})\right\},
\end{equation*}
which can be done in linear time according to Lemma~\ref{lem:ellipsoid_optimization}.
\end{proof}

\begin{remark}
The parameters $\bar{w}_0$, $\beta$ and $\gamma$ only depend on $\vect{w}=\vect{u}-\vect{l}$. When optimizing multiple times over $[\vect{l},\vect{u}]_{\K}$ with different objective functions, like in our setting, these parameters can be stored and need only be computed once.
\end{remark}

We close this section with Algorithm~\ref{alg:maxLn}, an explicit linear time algorithm for solving \eqref{eq:maxLn} according to the preceding theorem.

\begin{algorithm}[H]\label{alg:maxLn}
\caption{Explicit solution to Problem \eqref{eq:maxLn} } 
\SetKw{NOT}{not}
\SetKw{OR}{or}

\SetKwFunction{F}{F}

\KwData{Data $\vect{l}, \vect{u}, \vect{c}$ for Problem \eqref{eq:maxLn}, $\bar{w}_0, \beta, \gamma$ as in Theorem~\ref{thm:maxLn}.}
\KwResult{Solution $\vect{x}^*\in [\vect{l},\vect{u}]_{\K}$ to \eqref{eq:maxLn}.}
$\vect{w}=\vect{u}-\vect{l}$\;
$\vect{x}^*=\vect{0}$\;
\If{$\|\tilde{\vect{w}}\|_2^2<w_0^2$}{
$\tilde{\vect{y}}^* = \tilde{\vect{c}}+\frac{c_0}{w_0}\tilde{\vect{w}}$\;
$\tilde{\vect{y}}^* = \tilde{\vect{y}}^* + \beta \la \tilde{\vect{y}}^*,\tilde{\vect{w}}\ra \tilde{\vect{w}} $\;
\If{$\tilde{\vect{y}}^* = \vect{0}$}{
$\tilde{\vect{y}}^* = \frac{\gamma}{n}\vect{e}$\;
}
\Else{
$\tilde{\vect{y}}^* = \frac{\gamma}{\|\tilde{\vect{y}}^*\|_2}  \tilde{\vect{y}}^*$\;
}
$\tilde{\vect{x}}^* = \tilde{\vect{y}}^* + \beta \la \tilde{\vect{y}}^*,\tilde{\vect{w}}\ra \tilde{\vect{w}} $\;
$\tilde{\vect{x}}^* = \tilde{\vect{x}}^* + \frac{1}{2} \tilde{\vect{w}}$\;
$\vect{x}^*_0 = \frac{1}{w_0}\la \tilde{\vect{x}}^*, \tilde{\vect{w}}\ra + \bar{w}_0$\;
}
$\vect{x}^*  = \vect{x}^*+\vect{l}$\;
\Return $\argmax\left\{ \la \vect{y},\vect{c}\ra \I  \vect{y}\in \{\vect{l},\vect{x}^*,\vect{u}\} \right\}$\;
\end{algorithm}

\subsection{The positive semidefinite cone \texorpdfstring{$\K=\mc{S}^n_+$}{SDP}}

Let $\mc{S}^n_+$ denote the cone of symmetric $n\times n$ matrices that are positive semidefinite and let 
$\preceq$ be the corresponding conic order. It is important to note for statements about complexity that we can embed $\mc{S}^n_+\subseteq \R^N$ for $N=\binom{n}{2}$. 

Now \eqref{eq:assumption} reads
\begin{equation*}
\max\left\{ \la \matr{C},\matr{X}\ra \I \matr{L}\preceq \matr{X} \preceq \matr{U}\right\}.
\end{equation*}
Instead of treating this problem directly, we will perform a preprocessing step. We first use the substitution $\matr{Y}=\matr{X}-\matr{L}$ to get the equivalent problem
\begin{equation*}
\max\left\{ \la \matr{C},\matr{Y}\ra \I \matr{0}\preceq \matr{Y} \preceq \matr{U}-\matr{L}=:\matr{W}\right\}
\end{equation*}
where we dropped the constant $\la \matr{C},\matr{L}\ra$ from the objective. In the following, we will assume that $\matr{W}\in \intr(\mc{S}^n_+)$ to simplify the argument (the following can be adapted for the case where $\matr{W}$ is singular). 
Using the Cholesky decomposition $\matr{W} = \matr{VV}^\T$, we can rewrite $\matr{Y}=\matr{VZV}^\T$ to get the equivalent problem
\begin{equation}
\max\left\{ \la \matr{C}',\matr{Z}\ra \I \matr{0}\preceq \matr{Z} \preceq \matr{I}_n\right\},\label{eq:greedySDP}
\end{equation}
where $\matr{C}'=\matr{V^\T CV}$.
In particular, the conic constraints reduce to box-constraints on the eigenvalues of $Z$ and we can now solve the problem explicitly.

\begin{theorem}
Let $\matr{C}'=\matr{B^\T D B}$ be the eigenvalue decomposition of $\matr{C}'$. Then the solution to~\eqref{eq:greedySDP} is given by
\begin{equation*}
\matr{Z} = \matr{B^\T \Lambda^* B}
\end{equation*}
where $\matr{\Lambda}^*$ is a diagonal matrix with $\diag(\matr{\Lambda}^*)=\vect{\lambda}^*$ and $\vect{\lambda}^*$ is the solution of 
\begin{equation}
\max\left\{ \la \diag(\matr{D}), \vect{\lambda} \ra \I \vect{\lambda} \in  [0,1]^n \right\}.\label{eq:greedySDPLP}
\end{equation}

\end{theorem}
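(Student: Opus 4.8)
The plan is to diagonalize $\matr{C}'$ and reduce \eqref{eq:greedySDP} to the elementary linear program \eqref{eq:greedySDPLP} over the diagonal entries of a matrix squeezed between $\matr{0}$ and $\matr{I}_n$.

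First I would pass to the eigenbasis of $\matr{C}'$. Since $\matr{B}$ is orthogonal ($\matr{B}\matr{B}^\T=\matr{B}^\T\matr{B}=\matr{I}_n$), the substitution $\matr{M}:=\matr{B}\matr{Z}\matr{B}^\T$, i.e.\ $\matr{Z}=\matr{B}^\T\matr{M}\matr{B}$, is a bijection on symmetric matrices that preserves the Loewner order; in particular $\matr{0}\preceq\matr{Z}\preceq\matr{I}_n$ holds iff $\matr{0}\preceq\matr{M}\preceq\matr{I}_n$. Using $\matr{C}'=\matr{B}^\T\matr{D}\matr{B}$ and cyclicity of the trace,
\[
\la\matr{C}',\matr{Z}\ra=\tr(\matr{C}'\matr{Z})=\tr\!\big(\matr{B}^\T\matr{D}\matr{M}\matr{B}\big)=\tr(\matr{D}\matr{M})=\la\diag(\matr{D}),\diag(\matr{M})\ra .
\]
Thus \eqref{eq:greedySDP} is equivalent to maximizing $\la\diag(\matr{D}),\diag(\matr{M})\ra$ subject to $\matr{0}\preceq\matr{M}\preceq\matr{I}_n$.

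Next comes the upper bound. For any feasible $\matr{M}$ the diagonal entry $M_{ii}=\vect{e}_i^\T\matr{M}\vect{e}_i$ lies in $[0,1]$, because $\matr{0}\preceq\matr{M}$ gives $M_{ii}\geq 0$ and $\matr{M}\preceq\matr{I}_n$ gives $M_{ii}\leq 1$. Hence $\diag(\matr{M})\in[0,1]^n$, so $\la\diag(\matr{D}),\diag(\matr{M})\ra$ is at most the optimal value of \eqref{eq:greedySDPLP}, which is $\la\diag(\matr{D}),\vect{\lambda}^*\ra$. Finally I would check that this bound is attained by the diagonal matrix $\matr{\Lambda}^*$: it is symmetric with eigenvalues $\vect{\lambda}^*\in[0,1]^n$, hence $\matr{0}\preceq\matr{\Lambda}^*\preceq\matr{I}_n$, and it yields objective $\la\diag(\matr{D}),\diag(\matr{\Lambda}^*)\ra=\la\diag(\matr{D}),\vect{\lambda}^*\ra$. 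Transporting back through the bijection shows $\matr{Z}=\matr{B}^\T\matr{\Lambda}^*\matr{B}$ solves \eqref{eq:greedySDP}.

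There is no deep obstacle here: the only facts needed are that orthogonal conjugation preserves $\preceq$ and that $\matr{0}\preceq\matr{M}\preceq\matr{I}_n$ forces each $M_{ii}\in[0,1]$, both immediate. The one thing to be careful about is the bookkeeping of the orthogonal-basis convention so that the trace identity $\la\matr{C}',\matr{Z}\ra=\tr(\matr{D}\matr{M})$ comes out correctly; a reader who prefers can instead invoke the von Neumann/Ky Fan trace inequality to bound $\tr(\matr{C}'\matr{Z})$ by $\sum_i d_i^{\downarrow}\mu_i^{\downarrow}$ with $\vect{\mu}\in[0,1]^n$, but the diagonalization argument is cleaner and also exhibits the optimizer directly.
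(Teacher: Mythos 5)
Your argument is correct, and it takes a genuinely different route from the paper's. The paper invokes the eigenvalue rearrangement inequality (which it calls Hoffman--Wielandt; it is more commonly attributed to von Neumann or Richter--Fan), bounding $\la\matr{C}',\matr{Z}\ra$ by $\sum_i d_i^{\downarrow}\lambda_i^{\downarrow}(\matr{Z})$ and then observing that the eigenvalues of $\matr{Z}$ lie in $[0,1]^n$. You instead conjugate everything into the eigenbasis of $\matr{C}'$, so the objective becomes $\tr(\matr{D}\matr{M})=\sum_i D_{ii}M_{ii}$, and you only need the elementary fact that $\matr{0}\preceq\matr{M}\preceq\matr{I}_n$ forces each \emph{diagonal entry} $M_{ii}=\vect{e}_i^\T\matr{M}\vect{e}_i$ into $[0,1]$ --- a far weaker observation than anything about the eigenvalue vector of $\matr{M}$, and one that requires no rearrangement inequality. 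This is the better proof: it uses a strictly smaller toolbox, it produces the maximizer $\matr{\Lambda}^*$ directly rather than only after the separate verification step the paper appends, and it sidesteps the bookkeeping about sorting $\vect{d}$ and $\vect{\lambda}$ consistently that the paper's invocation of the rearrangement bound quietly relies on. You already flag the von Neumann alternative in your closing remark, so you see the paper's route; you were right to prefer yours.
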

\begin{proof}
Let $\vect{d}=\diag(\matr{D})$ and let $\vect{\lambda}$ denote the eigenvalues of $\matr{Z}$. Then the Hoffman-Wielandt inequality states
\begin{equation*}
\la \matr{C}',\matr{Z}\ra \leq \la \vect{d}, \matr{P} \vect{\lambda}\ra ,
\end{equation*}
where $\matr{P}$ is a permutation that assigns the $i$-th biggest entry of $\vect{\lambda}$ to the $i$-th biggest entry of $\vect{d}$ for all $i\in[n]$. Since $\vect{\lambda} \in [0,1]^n$, the right hand side is maximal when $\vect{\lambda}$ is the solution $\vect{\lambda}^*$ of \eqref{eq:greedySDPLP} and $\matr{P}$ the identity. Then one can verify that the left hand side also attains this upperbound by choosing $\matr{Z}=\matr{B}^\T \Diag(\vect{\lambda}^*)\matr{B}$.
\end{proof}

By solving \eqref{eq:greedyLP}, we are able to solve \eqref{eq:greedySDP} as well, but the actual computational burden lies in the corresponding reduction. To this end, we have the following result.

\begin{theorem}
The cone $\mc{S}^n_+$ is suitable for the \normalfont{CLP}-Newton method. 
\end{theorem}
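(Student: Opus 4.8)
The plan is to observe that the preprocessing chain already described, together with the preceding theorem on \eqref{eq:greedySDP}, reduces \eqref{eq:assumption} for $\K=\mc{S}^n_+$ to a fixed number of standard dense linear-algebra operations, and then to check that each of them stays well within the budget $\mathcal{O}(N^2)$, where $N=\binom{n}{2}=\Theta(n^2)$ is the dimension of the ambient space into which $\mc{S}^n_+$ is embedded.

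Concretely, starting from $\max\{\la\matr{C},\matr{X}\ra \I \matr{L}\preceq\matr{X}\preceq\matr{U}\}$, I would proceed as follows. First, form $\matr{W}=\matr{U}-\matr{L}$ and, assuming $\matr{W}\in\intr(\mc{S}^n_+)$, compute its Cholesky factor $\matr{W}=\matr{VV}^\T$. Next, form $\matr{C}'=\matr{V}^\T\matr{C}\matr{V}$ and note, via the congruence $\matr{Z}=\matr{V}^{-1}\matr{Y}\matr{V}^{-\T}$ (which sends $\matr{0}\preceq\matr{Y}\preceq\matr{W}$ to $\matr{0}\preceq\matr{Z}\preceq\matr{I}_n$ and has $\la\matr{C},\matr{Y}\ra=\la\matr{C}',\matr{Z}\ra$), that the problem is equivalent to \eqref{eq:greedySDP}. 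Then compute the eigendecomposition $\matr{C}'=\matr{B}^\T\matr{D}\matr{B}$, solve the box LP \eqref{eq:greedySDPLP} over $[0,1]^n$ by the greedy rule \eqref{eq:greedyLP} to obtain $\vect{\lambda}^*$, and invoke the preceding theorem to conclude that $\matr{Z}^*=\matr{B}^\T\matr{\Lambda}^*\matr{B}$ is optimal for \eqref{eq:greedySDP}. Finally, undo the substitutions, setting $\matr{Y}^*=\matr{V}\matr{Z}^*\matr{V}^\T$ and $\matr{X}^*=\matr{Y}^*+\matr{L}$; this is the maximizer of \eqref{eq:assumption}, and note that we return the argmax, as required by the minimum-norm-point subroutine, not merely the optimal value.

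It remains to count. The steps above involve only a constant number of $n\times n$ matrix additions and matrix--matrix products, one Cholesky factorization, and one symmetric eigendecomposition, all of which run in $\mathcal{O}(n^3)$ (treating the eigendecomposition as a black box with its usual $\mathcal{O}(n^3)$ cost, consistently with any practical implementation; one could also invoke a fast matrix-multiplication exponent to get $\mathcal{O}(n^\omega)$), while the box LP costs $\mathcal{O}(n)$. Hence the whole computation is $\mathcal{O}(n^3)$. Since $N=\binom{n}{2}=\Theta(n^2)$, we have $n^3=\Theta(N^{3/2})=o(N^2)$, so \eqref{eq:assumption} is solved comfortably within $\mathcal{O}(N^2)$ and $\mc{S}^n_+$ is suitable for the CLP-Newton method. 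The case where $\matr{W}$ is singular is handled by first restricting to $\rge(\matr{W})$: from $\matr{0}\preceq\matr{Y}\preceq\matr{W}$ one has $\ker(\matr{W})\subseteq\ker(\matr{Y})$, so the problem decouples over $\rge(\matr{W})$ and its orthogonal complement, and the same argument applies on the former.

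The main point to get right is therefore not a deep obstacle but the accounting: one must check that every reduction step --- in particular forming $\matr{C}'$ and undoing the two congruences --- is cubic in $n$, and that the (unavoidable) eigenvalue computation for $\matr{C}'$ and the Cholesky/eigen-step for $\matr{W}$ are acceptable. Because the symmetric-matrix embedding blows $n$ up to $N=\Theta(n^2)$, there is ample slack, and the cubic cost is comfortably subquadratic in the relevant dimension $N$.
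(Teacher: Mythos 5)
Your proof is correct and follows essentially the same route as the paper: reduce to \eqref{eq:greedySDP} via the Cholesky congruence, solve it by diagonalizing $\matr{C}'$ and a box LP on the eigenvalues, and observe that the whole pipeline is $\mathcal{O}(n^3)$, which is $\mathcal{O}(N^{3/2})\subseteq\mathcal{O}(N^2)$ in the ambient dimension $N=\binom{n}{2}$. You additionally spell out the singular-$\matr{W}$ case (by restriction to $\rge(\matr{W})$) and the need to return the maximizer rather than just the value, both of which the paper leaves implicit, but the argument is the same one.
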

\begin{proof}
The complexity of computing the eigenvalues of a $n\times n$ matrix as well as matrix computation is contained in $\mathcal{O}(n^3)$. Since $S^n_+\subseteq \R^N$ with $n\in \mathcal{O}(N^{1/2})$, we get an algorithm in $\mathcal{O}(n^3)\subseteq \mathcal{O}(N^{3/2})\subseteq \mathcal{O}(N^2)$.
\end{proof}

\begin{remark}
The preceding theorem may seem remarkable in terms of Remark~\ref{cor:join}, since the conic order $\preceq$ induced by $\mc{S}^n_+$ is explicitly known \emph{not} to define a lattice. 
In particular, if we have a proper interval $\matr{L}\prec \matr{U}$, then Slater's condition holds and we expect strong duality to hold in Lemma~\ref{lem:dual}, so that the preceding theorem yields an oracle for elements of the set of least upperbounds of $\{\matr{C}_-,-\matr{C}_+\}$ in $(\R^N,\preceq)$.
\end{remark}

\section{Experiments}

In this section we show some experiments done with a simple implementation of the CLP-Newton method. For this, we used MATLAB version 8.1.0.604 (R2013) with an Intel i5 of 3.2 GHz $\times$ 4 and 16 GB of memory. 

As a reference, we used the widespread SDPT3 package \cite{TTT03}.

\subsection{Data generation}

\textbf{SOCP}

\noindent Based on parameter tuples $\frac{n}{m}$, we generated random instances for $\K=\mc{L}_n$. We set $\vect{l}=\vect{0}$, $u_0=10$ and $\tilde{\vect{u}}$ to a random vector with entries in~$[-0.5, 0.5]$, which was afterwards normalized such that $\|\tilde{\vect{u}}\|_2$ was a random number in the interval $[0,10]$. 

The vector $\vect{c}$ was randomly chosen with entries in~$[-0.5, 0.5]$ and $\matr{A}$ was chosen as a random $m\times n$ matrix with entries in~$[0,1]$. To guarantee feasibility, we included the midpoint of $[\vect{l},\vect{u}]_{\mc{L}_n}$ into the feasible region by setting $\vect{b}=\frac{1}{2}\matr{A}(\vect{u}-\vect{l})$.
\medskip

\noindent\textbf{SDP}

\noindent Based on parameter tuples $\frac{n}{m}$, we generated random instances for $\K=\mc{S}^n_+$. We set $\matr{L}=\matr{0}$ and construct $\matr{U}$ in the following way:
We first construct a random $n\times n$ matrix $\matr{V}$ with values in $[0,1]$ and set $\matr{U}=\matr{VV}^{\T}+\frac{1}{10}\matr{I}_n$. Afterwards, $\matr{U}$ is normalized such that $\tr(\matr{U})=10$.

The remaining parameters are chosen in the same way as for SOCP: $\matr{C}$ was randomly chosen with entries in~$[-0.5, 0.5]$ and $\mc{A}$ was chosen as a random $m\times n^2$ linear operator with entries in~$[0,1]$. To guarantee feasibility, we included the midpoint of $[\matr{L},\matr{U}]_{\mc{S}^n_+}$ into the feasible region by setting $\matr{B}=\frac{1}{2}\mc{A}(\matr{U}-\matr{L})$.

\subsection{Plots}

In the following plots, each data point $\frac{n}{m}$ corresponds to the average of 25 instances randomly generated according to the procedure outlined before with parameters $\frac{n}{m}$. The error tolerance for CLPN was set to $10^{-6}$ and the error tolerance $\varepsilon$ given in the plots apply to the subroutine MNP.

\noindent
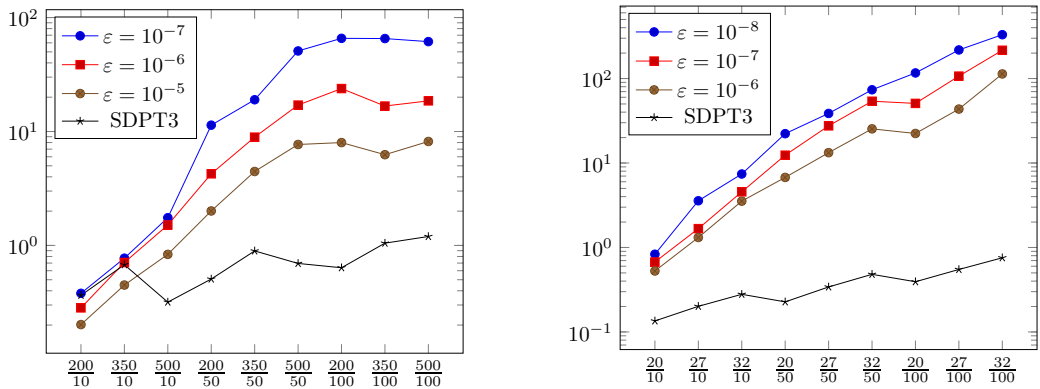
\begin{figure}[!ht]

\begin{minipage}{.5\textwidth}
\centering
\begin{tikzpicture}[scale=0.8]
\begin{axis}[
	legend entries={$\varepsilon=10^{-7}$,$\varepsilon=10^{-6}$,$\varepsilon=10^{-5}$, SDPT3},
	legend style={at={(0.02,0.98)},anchor=north west},
	ymode = log,
    xtick={1,2,3,4,5,6,7,8,9},
    xticklabels={$\frac{200}{10}$,$\frac{350}{10}$,$\frac{500}{10}$,
			     $\frac{200}{50}$,$\frac{350}{50}$,$\frac{500}{50}$,
    			 $\frac{200}{100}$,$\frac{350}{100}$,$\frac{500}{100}$}
]
\addplot table [x=Inst, y=eps7] {Data/SOCP_Time_Table};
\addplot table [x=Inst, y=eps6] {Data/SOCP_Time_Table};
\addplot table [x=Inst, y=eps5] {Data/SOCP_Time_Table};
\addplot table [x=Inst, y=SDPT3]{Data/SOCP_Time_Table};
\end{axis}
\end{tikzpicture}
\end{minipage}%
\begin{minipage}{.5\textwidth}
\centering
\begin{tikzpicture}[scale=0.8]
\begin{axis}[
	legend entries={$\varepsilon=10^{-8}$, $\varepsilon=10^{-7}$,$\varepsilon=10^{-6}$, SDPT3},
	legend style={at={(0.02,0.98)},anchor=north west},
	ymode = log,
    xtick={1,2,3,4,5,6,7,8,9},
    xticklabels={$\frac{20}{10}$,$\frac{27}{10}$,$\frac{32}{10}$,
			     $\frac{20}{50}$,$\frac{27}{50}$,$\frac{32}{50}$,
    			 $\frac{20}{100}$,$\frac{27}{100}$,$\frac{32}{100}$}
]
\addplot table [x=Inst, y=eps8] {Data/SDP_Time_Table};
\addplot table [x=Inst, y=eps7] {Data/SDP_Time_Table};
\addplot table [x=Inst, y=eps6] {Data/SDP_Time_Table};
\addplot table [x=Inst, y=SDPT3]{Data/SDP_Time_Table};
\end{axis}
\end{tikzpicture}
\end{minipage}

\caption{Runtime (sec) for parameters $\frac{n}{m}$. Left: SOCP. Right: SDP.}
\label{fig:runtime}
\end{figure}

\begin{figure}[!ht]

\noindent
\begin{minipage}{.5\textwidth}
\centering
\begin{tikzpicture}[scale=0.8]
\begin{axis}[
	legend entries={$\varepsilon=10^{-7}$,$\varepsilon=10^{-6}$,$\varepsilon=10^{-5}$},
	legend style={at={(0.02,0.98)},anchor=north west},
    xtick={1,2,3,4,5,6,7,8,9},
    xticklabels={$\frac{200}{10}$,$\frac{350}{10}$,$\frac{500}{10}$,
			     $\frac{200}{50}$,$\frac{350}{50}$,$\frac{500}{50}$,
    			 $\frac{200}{100}$,$\frac{350}{100}$,$\frac{500}{100}$}
]
\addplot table [x=Inst, y=eps7] {Data/SOCP_Nstep_Table};
\addplot table [x=Inst, y=eps6] {Data/SOCP_Nstep_Table};
\addplot table [x=Inst, y=eps5] {Data/SOCP_Nstep_Table};
\end{axis}
\end{tikzpicture}
\end{minipage}%
\begin{minipage}{.5\textwidth}
\centering
\begin{tikzpicture}[scale=0.8]
\begin{axis}[
	legend entries={$\varepsilon=10^{-8}$,$\varepsilon=10^{-7}$,$\varepsilon=10^{-6}$},
	legend style={at={(0.02,0.98)},anchor=north west},
    xtick={1,2,3,4,5,6,7,8,9},
    xticklabels={$\frac{20}{10}$,$\frac{27}{10}$,$\frac{32}{10}$,
			     $\frac{20}{50}$,$\frac{27}{50}$,$\frac{32}{50}$,
    			 $\frac{20}{100}$,$\frac{27}{100}$,$\frac{32}{100}$}	
]
\addplot table [x=Inst, y=eps8] {Data/SDP_Nstep_Table};
\addplot table [x=Inst, y=eps7] {Data/SDP_Nstep_Table};
\addplot table [x=Inst, y=eps6] {Data/SDP_Nstep_Table};
\end{axis}
\end{tikzpicture}
\end{minipage}

\caption{Newton-steps for parameters $\frac{n}{m}$. Left: SOCP. Right: SDP.}
\label{fig:Newton}
\end{figure}
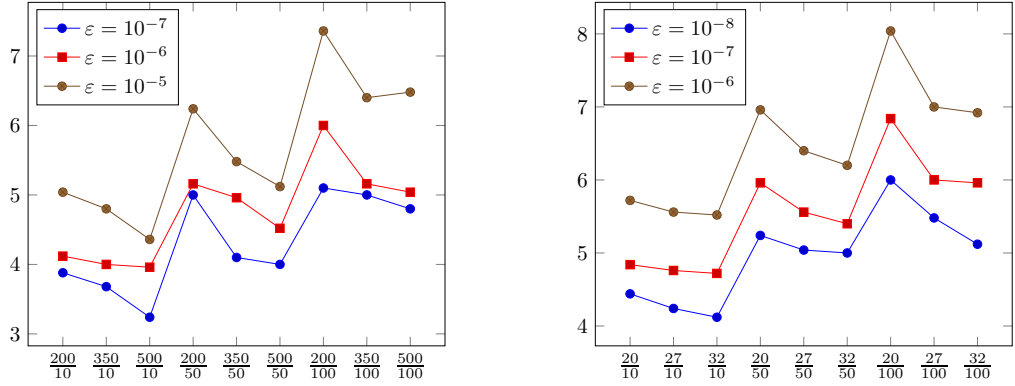

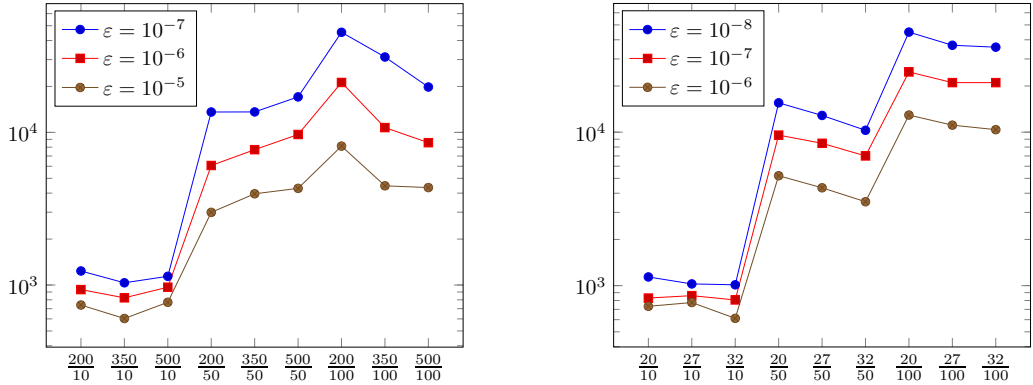
\begin{figure}[!ht]

\noindent
\begin{minipage}{.5\textwidth}
\centering
\begin{tikzpicture}[scale=0.8]
\begin{axis}[
	legend entries={$\varepsilon=10^{-7}$,$\varepsilon=10^{-6}$,$\varepsilon=10^{-5}$},
	legend style={at={(0.02,0.98)},anchor=north west},
	ymode = log,
    xtick={1,2,3,4,5,6,7,8,9},
    xticklabels={$\frac{200}{10}$,$\frac{350}{10}$,$\frac{500}{10}$,
			     $\frac{200}{50}$,$\frac{350}{50}$,$\frac{500}{50}$,
    			 $\frac{200}{100}$,$\frac{350}{100}$,$\frac{500}{100}$}
]
\addplot table [x=Inst, y=eps7] {Data/SOCP_FWstep_Table};
\addplot table [x=Inst, y=eps6] {Data/SOCP_FWstep_Table};
\addplot table [x=Inst, y=eps5] {Data/SOCP_FWstep_Table};
\end{axis}
\end{tikzpicture}
\end{minipage}%
\begin{minipage}{.5\textwidth}
\centering
\begin{tikzpicture}[scale=0.8]
\begin{axis}[
	legend entries={$\varepsilon=10^{-8}$,$\varepsilon=10^{-7}$,$\varepsilon=10^{-6}$},
	legend style={at={(0.02,0.98)},anchor=north west},
	ymode = log,
    xtick={1,2,3,4,5,6,7,8,9},
    xticklabels={$\frac{20}{10}$,$\frac{27}{10}$,$\frac{32}{10}$,
			     $\frac{20}{50}$,$\frac{27}{50}$,$\frac{32}{50}$,
    			 $\frac{20}{100}$,$\frac{27}{100}$,$\frac{32}{100}$}	
]
\addplot table [x=Inst, y=eps8] {Data/SDP_FWstep_Table};
\addplot table [x=Inst, y=eps7] {Data/SDP_FWstep_Table};
\addplot table [x=Inst, y=eps6] {Data/SDP_FWstep_Table};
\end{axis}
\end{tikzpicture}

\end{minipage}

\caption{MNP computations for parameters $\frac{n}{m}$. Left: SOCP. Right: SDP.}
\label{fig:MNP}
\end{figure}

The plots in Figure~\ref{fig:runtime} show that the choice of accuracy for MNP has a great impact on the overal running time of the algorithm. While reducing the accuracy can speed up the algorithm significantly, going below the accuracy given in the plots often resulted in major problems in converging to the solution, so care has to be taken by choosing this parameter. 

Overall, the data in Figure~\ref{fig:Newton} resembles the results of \cite{Fujishige08} for the case of $\R^n_+$, in the sense that only a few Newton-steps are necessary to get a close approximate solution. Figure~\ref{fig:MNP} also shows that, like in the original paper, the number $m$ of constraints seems to a have a much larger impact on the performance than the number of the variables $n$, since much more subproblems have to be solved. 

\section{Conclusion}

In this paper, we have shown that the CLP-Newton method can be successfully used to solve CLPs over $\K$-zonotopes for a proper, convex self-dual cone $\K$. In particular, this resulted in a new method for CLPs, which is parallelizable if $\K$ can be decomposed into a Cartesian product of smaller cones.
We also introduced the concept of $\K$-zonotopes and commented on some of their properties in terms of optimization, which might be interesting objects in their own right.

As an application of the framework, we gave explicit algorithms to solve linear problems over $\mc{L}_n$-zonotopes and $\mc{S}_+^n$-zonotopes and examined how they perform in experiments. 

Since approximating the minimum-norm-point problem for general cones with a Frank-Wolfe algorithm is apparently much less efficient than for the non-negative cone, our implementation was slow compared to the reference algorithm. In particular, since the number of Newton-steps remains small even for more complex cones than $\R^n_+$, any improvement for the minimum-norm-point subroutine would result in a much better overall algorithm. To this end, it would be interesting to have more control about the lowest necessary precision for MNP, since a higher precision tends to bloat the runtime unnecessarily, as shown in the experiments.

An interesting question for further research will be whether the CLP-Newton method can be improved to compete with interior method on special structures and how parallelization can be applied successfully.

\bibliographystyle{plain}
\bibliography{papers}

\end{document}